\newtheorem{theorem}{Theorem}
\theoremstyle{plain}
\newtheorem{axiom}{Axiom}[section]
\newtheorem{conjecture}{Conjecture}[section]
\newtheorem{corollary}{Corollary}[section]
\newtheorem{definition}{Definition}[section]
\newtheorem{example}{Example}[section]
\newtheorem{exercise}{Exercise}[section]
\newtheorem{lemma}{Lemma}[section]
\newtheorem{proposition}{Proposition}[section]
\newtheorem{remark}{Remark}[section]
\numberwithin{equation}{section} \oddsidemargin.25cm
\let\pdfoutput=\undefined\fi
\chardef\@x10\chardef\@xv60
\def\tcitime{
\def\@time{%
  \@minute\time\@hour\@minute\divide\@hour\@xv
  \ifnum\@hour<\@x 0\fi\the\@hour:%
  \multiply\@hour\@xv\advance\@minute-\@hour
  \ifnum\@minute<\@x 0\fi\the\@minute
  }}%
\def\x@hyperref#1#2#3{%
   % Turn off various catcodes before reading parameter 4
   \catcode`\~ = 12
   \catcode`\$ = 12
   \catcode`\_ = 12
   \catcode`\# = 12
   \catcode`\& = 12
   \catcode`\% = 12
   \y@hyperref{#1}{#2}{#3}%
}
\def\y@hyperref#1#2#3#4{%
   #2\ref{#4}#3
   \catcode`\~ = 13
   \catcode`\$ = 3
   \catcode`\_ = 8
   \catcode`\# = 6
   \catcode`\& = 4
   \catcode`\% = 14
}
\def\QCTOpt[#1]#2{%
  \def\QCTOptB{#1}
  \def\QCTOptA{#2}
}
\def\QCTNOpt#1{%
  \def\QCTOptA{#1}
  \let\QCTOptB\empty
}
\def\Qct{%
  \@ifnextchar[{%
    \QCTOpt}{\QCTNOpt}
}
\def\QCBOpt[#1]#2{%
  \def\QCBOptB{#1}%
  \def\QCBOptA{#2}%
}
\def\QCBNOpt#1{%
  \def\QCBOptA{#1}%
  \let\QCBOptB\empty
}
\def\Qcb{%
  \@ifnextchar[{%
    \QCBOpt}{\QCBNOpt}%
}
\def\PrepCapArgs{%
  \ifx\QCBOptA\empty
    \ifx\QCTOptA\empty
      {}%
    \else
      \ifx\QCTOptB\empty
        {\QCTOptA}%
      \else
        [\QCTOptB]{\QCTOptA}%
      \fi
    \fi
  \else
    \ifx\QCBOptA\empty
      {}%
    \else
      \ifx\QCBOptB\empty
        {\QCBOptA}%
      \else
        [\QCBOptB]{\QCBOptA}%
      \fi
    \fi
  \fi
}
\def\GRAPHICSPS#1{%
 \ifcase\GRAPHICSTYPE%\GRAPHICSTYPE=0
   \special{ps: #1}%
 \or%\GRAPHICSTYPE=1
   \special{language "PS", include "#1"}%
%%%\or%\GRAPHICSTYPE=2
%%%  #1%
 \fi
}%
\def\graffile#1#2#3#4{%
%%% \ifnum\GRAPHICSTYPE=\tw@
%%%  %Following if using psfig
%%%  \@ifundefined{psfig}{\input psfig.tex}{}%
%%%  \psfig{file=#1, height=#3, width=#2}%
%%% \else
  %Following for all others
  % JCS - added BOXTHEFRAME, see below
    \bgroup
	   \@inlabelfalse
       \leavevmode
       \@ifundefined{bbl@deactivate}{\def~{\string~}}{\activesoff}%
        \raise -#4 \BOXTHEFRAME{%
           \hbox to #2{\raise #3\hbox to #2{\null #1\hfil}}}%
    \egroup
}%
\def\draftbox#1#2#3#4{%
 \leavevmode\raise -#4 \hbox{%
  \frame{\rlap{\protect\tiny #1}\hbox to #2%
   {\vrule height#3 width\z@ depth\z@\hfil}%
  }%
 }%
}%
\let\nographics=\@msidraft
\newif\ifwasdraft
\def\GRAPHIC#1#2#3#4#5{%
   \ifnum\@msidraft=\@ne\draftbox{#2}{#3}{#4}{#5}%
   \else\graffile{#1}{#3}{#4}{#5}%
   \fi
}
\def\addtoLaTeXparams#1{%
    \edef\LaTeXparams{\LaTeXparams #1}}%
\newif\ifBoxFrame \BoxFramefalse
\newif\ifOverFrame \OverFramefalse
\newif\ifUnderFrame \UnderFramefalse
\def\BOXTHEFRAME#1{%
   \hbox{%
      \ifBoxFrame
         \frame{#1}%
      \else
         {#1}%
      \fi
   }%
}
\def\doFRAMEparams#1{\BoxFramefalse\OverFramefalse\UnderFramefalse\readFRAMEparams#1\end}%
\def\readFRAMEparams#1{%
 \ifx#1\end%
  \let\next=\relax
  \else
  \ifx#1i\dispkind=\z@\fi
  \ifx#1d\dispkind=\@ne\fi
  \ifx#1f\dispkind=\tw@\fi
  \ifx#1t\addtoLaTeXparams{t}\fi
  \ifx#1b\addtoLaTeXparams{b}\fi
  \ifx#1p\addtoLaTeXparams{p}\fi
  \ifx#1h\addtoLaTeXparams{h}\fi
  \ifx#1X\BoxFrametrue\fi
  \ifx#1O\OverFrametrue\fi
  \ifx#1U\UnderFrametrue\fi
  \ifx#1w
    \ifnum\@msidraft=1\wasdrafttrue\else\wasdraftfalse\fi
    \@msidraft=\@ne
  \fi
  \let\next=\readFRAMEparams
  \fi
 \next
 }%
\def\IFRAME#1#2#3#4#5#6{%
      \bgroup
      \let\QCTOptA\empty
      \let\QCTOptB\empty
      \let\QCBOptA\empty
      \let\QCBOptB\empty
      #6%
      \parindent=0pt
      \leftskip=0pt
      \rightskip=0pt
      \setbox0=\hbox{\QCBOptA}%
      \@tempdima=#1\relax
      \ifOverFrame
          % Do this later
          \typeout{This is not implemented yet}%
          \show\HELP
      \else
         \ifdim\wd0>\@tempdima
            \advance\@tempdima by \@tempdima
            \ifdim\wd0 >\@tempdima
               \setbox1 =\vbox{%
                  \unskip\hbox to \@tempdima{\hfill\GRAPHIC{#5}{#4}{#1}{#2}{#3}\hfill}%
                  \unskip\hbox to \@tempdima{\parbox[b]{\@tempdima}{\QCBOptA}}%
               }%
               \wd1=\@tempdima
            \else
               \textwidth=\wd0
               \setbox1 =\vbox{%
                 \noindent\hbox to \wd0{\hfill\GRAPHIC{#5}{#4}{#1}{#2}{#3}\hfill}\\%
                 \noindent\hbox{\QCBOptA}%
               }%
               \wd1=\wd0
            \fi
         \else
            \ifdim\wd0>0pt
              \hsize=\@tempdima
              \setbox1=\vbox{%
                \unskip\GRAPHIC{#5}{#4}{#1}{#2}{0pt}%
                \break
                \unskip\hbox to \@tempdima{\hfill \QCBOptA\hfill}%
              }%
              \wd1=\@tempdima
           \else
              \hsize=\@tempdima
              \setbox1=\vbox{%
                \unskip\GRAPHIC{#5}{#4}{#1}{#2}{0pt}%
              }%
              \wd1=\@tempdima
           \fi
         \fi
         \@tempdimb=\ht1
         %\advance\@tempdimb by \dp1
         \advance\@tempdimb by -#2
         \advance\@tempdimb by #3
         \leavevmode
         \raise -\@tempdimb \hbox{\box1}%
      \fi
      \egroup%
}%
\def\DFRAME#1#2#3#4#5{%
  \vspace\topsep
  \hfil\break
  \bgroup
     \leftskip\@flushglue
	 \rightskip\@flushglue
	 \parindent\z@
	 \parfillskip\z@skip
     \let\QCTOptA\empty
     \let\QCTOptB\empty
     \let\QCBOptA\empty
     \let\QCBOptB\empty
	 \vbox\bgroup
        \ifOverFrame 
           #5\QCTOptA\par
        \fi
        \GRAPHIC{#4}{#3}{#1}{#2}{\z@}%
        \ifUnderFrame 
           \break#5\QCBOptA
        \fi
	 \egroup
  \egroup
  \vspace\topsep
  \break
}%
\def\FFRAME#1#2#3#4#5#6#7{%
 %If float.sty loaded and float option is 'h', change to 'H'  (gp) 1998/09/05
  \@ifundefined{floatstyle}
    {%floatstyle undefined (and float.sty not present), no change
     \begin{figure}[#1]%
    }
    {%floatstyle DEFINED
	 \ifx#1h%Only the h parameter, change to H
      \begin{figure}[H]%
	 \else
      \begin{figure}[#1]%
	 \fi
	}
  \let\QCTOptA\empty
  \let\QCTOptB\empty
  \let\QCBOptA\empty
  \let\QCBOptB\empty
  \ifOverFrame
    #4
    \ifx\QCTOptA\empty
    \else
      \ifx\QCTOptB\empty
        \caption{\QCTOptA}%
      \else
        \caption[\QCTOptB]{\QCTOptA}%
      \fi
    \fi
    \ifUnderFrame\else
      \label{#5}%
    \fi
  \else
    \UnderFrametrue%
  \fi
  \begin{center}\GRAPHIC{#7}{#6}{#2}{#3}{\z@}\end{center}%
  \ifUnderFrame
    #4
    \ifx\QCBOptA\empty
      \caption{}%
    \else
      \ifx\QCBOptB\empty
        \caption{\QCBOptA}%
      \else
        \caption[\QCBOptB]{\QCBOptA}%
      \fi
    \fi
    \label{#5}%
  \fi
  \end{figure}%
 }%
\def\makeactives{
  \catcode`\"=\active
  \catcode`\;=\active
  \catcode`\:=\active
  \catcode`\'=\active
  \catcode`\~=\active
}
   \gdef\activesoff{%
      \def"{\string"}%
      \def;{\string;}%
      \def:{\string:}%
      \def'{\string'}%
      \def~{\string~}%
      %\bbl@deactivate{"}%
      %\bbl@deactivate{;}%
      %\bbl@deactivate{:}%
      %\bbl@deactivate{'}%
    }
\def\FRAME#1#2#3#4#5#6#7#8{%
 \bgroup
 \ifnum\@msidraft=\@ne
   \wasdrafttrue
 \else
   \wasdraftfalse%
 \fi
 \def\LaTeXparams{}%
 \dispkind=\z@
 \def\LaTeXparams{}%
 \doFRAMEparams{#1}%
 \ifnum\dispkind=\z@\IFRAME{#2}{#3}{#4}{#7}{#8}{#5}\else
  \ifnum\dispkind=\@ne\DFRAME{#2}{#3}{#7}{#8}{#5}\else
   \ifnum\dispkind=\tw@
    \edef\@tempa{\noexpand\FFRAME{\LaTeXparams}}%
    \@tempa{#2}{#3}{#5}{#6}{#7}{#8}%
    \fi
   \fi
  \fi
  \ifwasdraft\@msidraft=1\else\@msidraft=0\fi{}%
  \egroup
 }%
\def\TEXUX#1{"texux"}
\long\def\QQQ#1#2{%
     \long\expandafter\def\csname#1\endcsname{#2}}%
\long\def\QQA#1#2{}%
\def\QTR#1#2{{\csname#1\endcsname {#2}}}%
\def\EXPAND#1[#2]#3{}%
\def\NOEXPAND#1[#2]#3{}%
\def\LaTeXparent#1{}%
\def\ChildStyles#1{}%
\def\ChildDefaults#1{}%
\def\QTagDef#1#2#3{}%
  \providecommand{\UNICODE}[2][]{\protect\rule{.1in}{.1in}}
  \providecommand{\U}[1]{\protect\rule{.1in}{.1in}}
\def\QQfnmark#1{\footnotemark}
 \def\abstract{%
  \if@twocolumn
   \section*{Abstract (Not appropriate in this style!)}%
   \else \small 
   \begin{center}{\bf Abstract\vspace{-.5em}\vspace{\z@}}\end{center}%
   \quotation 
   \fi
  }%
   \def\registered{\relax\ifmmode{}\r@gistered
                    \else$\m@th\r@gistered$\fi}%
 \def\r@gistered{^{\ooalign
  {\hfil\raise.07ex\hbox{$\scriptstyle\rm\text{R}$}\hfil\crcr
  \mathhexbox20D}}}}{}%
\newdimen\theight
\def\newfmtname{LaTeX2e}
  \DeclareOldFontCommand{\rm}{\normalfont\rmfamily}{\mathrm}
  \DeclareOldFontCommand{\sf}{\normalfont\sffamily}{\mathsf}
  \DeclareOldFontCommand{\tt}{\normalfont\ttfamily}{\mathtt}
  \DeclareOldFontCommand{\bf}{\normalfont\bfseries}{\mathbf}
  \DeclareOldFontCommand{\it}{\normalfont\itshape}{\mathit}
  \DeclareOldFontCommand{\sl}{\normalfont\slshape}{\@nomath\sl}
  \DeclareOldFontCommand{\sc}{\normalfont\scshape}{\@nomath\sc}
\def\alpha{{\Greekmath 010B}}%
\def\beta{{\Greekmath 010C}}%
\def\gamma{{\Greekmath 010D}}%
\def\delta{{\Greekmath 010E}}%
\def\epsilon{{\Greekmath 010F}}%
\def\zeta{{\Greekmath 0110}}%
\def\eta{{\Greekmath 0111}}%
\def\theta{{\Greekmath 0112}}%
\def\iota{{\Greekmath 0113}}%
\def\kappa{{\Greekmath 0114}}%
\def\lambda{{\Greekmath 0115}}%
\def\mu{{\Greekmath 0116}}%
\def\nu{{\Greekmath 0117}}%
\def\xi{{\Greekmath 0118}}%
\def\pi{{\Greekmath 0119}}%
\def\rho{{\Greekmath 011A}}%
\def\sigma{{\Greekmath 011B}}%
\def\tau{{\Greekmath 011C}}%
\def\upsilon{{\Greekmath 011D}}%
\def\phi{{\Greekmath 011E}}%
\def\chi{{\Greekmath 011F}}%
\def\psi{{\Greekmath 0120}}%
\def\omega{{\Greekmath 0121}}%
\def\varepsilon{{\Greekmath 0122}}%
\def\vartheta{{\Greekmath 0123}}%
\def\varpi{{\Greekmath 0124}}%
\def\varrho{{\Greekmath 0125}}%
\def\varsigma{{\Greekmath 0126}}%
\def\varphi{{\Greekmath 0127}}%
\def\nabla{{\Greekmath 0272}}
\def\FindBoldGroup{%
   {\setbox0=\hbox{$\mathbf{x\global\edef\theboldgroup{\the\mathgroup}}$}}%
}
\def\Greekmath#1#2#3#4{%
    \if@compatibility
        \ifnum\mathgroup=\symbold
           \mathchoice{\mbox{\boldmath$\displaystyle\mathchar"#1#2#3#4$}}%
                      {\mbox{\boldmath$\textstyle\mathchar"#1#2#3#4$}}%
                      {\mbox{\boldmath$\scriptstyle\mathchar"#1#2#3#4$}}%
                      {\mbox{\boldmath$\scriptscriptstyle\mathchar"#1#2#3#4$}}%
        \else
           \mathchar"#1#2#3#4% 
        \fi 
    \else 
        \FindBoldGroup
        \ifnum\mathgroup=\theboldgroup % For 2e
           \mathchoice{\mbox{\boldmath$\displaystyle\mathchar"#1#2#3#4$}}%
                      {\mbox{\boldmath$\textstyle\mathchar"#1#2#3#4$}}%
                      {\mbox{\boldmath$\scriptstyle\mathchar"#1#2#3#4$}}%
                      {\mbox{\boldmath$\scriptscriptstyle\mathchar"#1#2#3#4$}}%
        \else
           \mathchar"#1#2#3#4% 
        \fi     	    
	  \fi}
\newif\ifGreekBold  \GreekBoldfalse
\let\SAVEPBF=\pbf
\def\pbf{\GreekBoldtrue\SAVEPBF}%
  \newcounter{equationnumber}  
  \def\mathletters{%
     \addtocounter{equation}{1}
     \edef\@currentlabel{\theequation}%
     \setcounter{equationnumber}{\c@equation}
     \setcounter{equation}{0}%
     \edef\theequation{\@currentlabel\noexpand\alph{equation}}%
  }
    \def\BibTeX{{\rm B\kern-.05em{\sc i\kern-.025em b}\kern-.08em
                 T\kern-.1667em\lower.7ex\hbox{E}\kern-.125emX}}}{}%
\def\AmS{{\protect\usefont{OMS}{cmsy}{m}{n}%
                A\kern-.1667em\lower.5ex\hbox{M}\kern-.125emS}}}{}%
\def\@@eqncr{\let\@tempa\relax
    \ifcase\@eqcnt \def\@tempa{& & &}\or \def\@tempa{& &}%
      \else \def\@tempa{&}\fi
     \@tempa
     \if@eqnsw
        \iftag@
           \@taggnum
        \else
           \@eqnnum\stepcounter{equation}%
        \fi
     \fi
     \global\tag@false
     \global\@eqnswtrue
     \global\@eqcnt\z@\cr}
\def\TCItag{\@ifnextchar*{\@TCItagstar}{\@TCItag}}
\def\@TCItag#1{%
    \global\tag@true
    \global\def\@taggnum{(#1)}%
    \global\def\@currentlabel{#1}}
\def\@TCItagstar*#1{%
    \global\tag@true
    \global\def\@taggnum{#1}%
    \global\def\@currentlabel{#1}}
\def\tint{\msi@int\textstyle\int}%
\def\tiint{\msi@int\textstyle\iint}%
\def\tiiint{\msi@int\textstyle\iiint}%
\def\tiiiint{\msi@int\textstyle\iiiint}%
\def\tidotsint{\msi@int\textstyle\idotsint}%
\def\toint{\msi@int\textstyle\oint}%
\newtoks\temptoksa
\newtoks\temptoksb
\newtoks\temptoksc
\def\msi@int#1#2{%
 \def\@temp{{#1#2\the\temptoksc_{\the\temptoksa}^{\the\temptoksb}}}%   
 \futurelet\@nextcs
 \@int
}
\def\@int{%
   \ifx\@nextcs\limits
      \typeout{Found limits}%
      \temptoksc={\limits}%
	  \let\@next\@intgobble%
   \else\ifx\@nextcs\nolimits
      \typeout{Found nolimits}%
      \temptoksc={\nolimits}%
	  \let\@next\@intgobble%
   \else
      \typeout{Did not find limits or no limits}%
      \temptoksc={}%
      \let\@next\msi@limits%
   \fi\fi
   \@next   
}%
\def\@intgobble#1{%
   \typeout{arg is #1}%
   \msi@limits
}
\def\msi@limits{%
   \temptoksa={}%
   \temptoksb={}%
   \@ifnextchar_{\@limitsa}{\@limitsb}%
}
\def\@limitsa_#1{%
   \temptoksa={#1}%
   \@ifnextchar^{\@limitsc}{\@temp}%
}
\def\@limitsb{%
   \@ifnextchar^{\@limitsc}{\@temp}%
}
\def\@limitsc^#1{%
   \temptoksb={#1}%
   \@ifnextchar_{\@limitsd}{\@temp}%   
}
\def\@limitsd_#1{%
   \temptoksa={#1}%
   \@temp
}
\def\dint{\msi@int\displaystyle\int}%
\def\diint{\msi@int\displaystyle\iint}%
\def\diiint{\msi@int\displaystyle\iiint}%
\def\diiiint{\msi@int\displaystyle\iiiint}%
\def\didotsint{\msi@int\displaystyle\idotsint}%
\def\doint{\msi@int\displaystyle\oint}%
\def\ExitTCILatex{\makeatother }
\if@compatibility\message{amsmath already loaded}\fi\aftergroup\ExitTCILatex}
\if@compatibility\message{amstex already loaded}\fi\aftergroup\ExitTCILatex}
\if@compatibility\message{amsgen already loaded}\fi\aftergroup\ExitTCILatex}
\let\DOTSI\relax
\def\RIfM@{\relax\ifmmode}%
\def\FN@{\futurelet\next}%
\def\iint{\DOTSI\intno@\tw@\FN@\ints@}%
\def\iiint{\DOTSI\intno@\thr@@\FN@\ints@}%
\def\iiiint{\DOTSI\intno@4 \FN@\ints@}%
\def\idotsint{\DOTSI\intno@\z@\FN@\ints@}%
\def\ints@{\findlimits@\ints@@}%
\newif\iflimtoken@
\newif\iflimits@
\def\findlimits@{\limtoken@true\ifx\next\limits\limits@true
 \else\ifx\next\nolimits\limits@false\else
 \limtoken@false\ifx\ilimits@\nolimits\limits@false\else
 \ifinner\limits@false\else\limits@true\fi\fi\fi\fi}%
\def\multint@{\int\ifnum\intno@=\z@\intdots@                          %1
 \else\intkern@\fi                                                    %2
 \ifnum\intno@>\tw@\int\intkern@\fi                                   %3
 \ifnum\intno@>\thr@@\int\intkern@\fi                                 %4
 \int}%                                                               %5
\def\multintlimits@{\intop\ifnum\intno@=\z@\intdots@\else\intkern@\fi
 \ifnum\intno@>\tw@\intop\intkern@\fi
 \ifnum\intno@>\thr@@\intop\intkern@\fi\intop}%
\def\intic@{%
    \mathchoice{\hskip.5em}{\hskip.4em}{\hskip.4em}{\hskip.4em}}%
\def\negintic@{\mathchoice
 {\hskip-.5em}{\hskip-.4em}{\hskip-.4em}{\hskip-.4em}}%
\def\ints@@{\iflimtoken@                                              %1
 \def\ints@@@{\iflimits@\negintic@
   \mathop{\intic@\multintlimits@}\limits                             %2
  \else\multint@\nolimits\fi                                          %3
  \eat@}%                                                             %4
 \else                                                                %5
 \def\ints@@@{\iflimits@\negintic@
  \mathop{\intic@\multintlimits@}\limits\else
  \multint@\nolimits\fi}\fi\ints@@@}%
\def\intkern@{\mathchoice{\!\!\!}{\!\!}{\!\!}{\!\!}}%
\def\plaincdots@{\mathinner{\cdotp\cdotp\cdotp}}%
\def\intdots@{\mathchoice{\plaincdots@}%
 {{\cdotp}\mkern1.5mu{\cdotp}\mkern1.5mu{\cdotp}}%
 {{\cdotp}\mkern1mu{\cdotp}\mkern1mu{\cdotp}}%
 {{\cdotp}\mkern1mu{\cdotp}\mkern1mu{\cdotp}}}%
\def\RIfM@{\relax\protect\ifmmode}
\def\text{\RIfM@\expandafter\text@\else\expandafter\mbox\fi}
\let\nfss@text\text
\def\text@#1{\mathchoice
   {\textdef@\displaystyle\f@size{#1}}%
   {\textdef@\textstyle\tf@size{\firstchoice@false #1}}%
   {\textdef@\textstyle\sf@size{\firstchoice@false #1}}%
   {\textdef@\textstyle \ssf@size{\firstchoice@false #1}}%
   \glb@settings}
\def\textdef@#1#2#3{\hbox{{%
                    \everymath{#1}%
                    \let\f@size#2\selectfont
                    #3}}}
\newif\iffirstchoice@
\def\Let@{\relax\iffalse{\fi\let\\=\cr\iffalse}\fi}%
\def\vspace@{\def\vspace##1{\crcr\noalign{\vskip##1\relax}}}%
\def\multilimits@{\bgroup\vspace@\Let@
 \baselineskip\fontdimen10 \scriptfont\tw@
 \advance\baselineskip\fontdimen12 \scriptfont\tw@
 \lineskip\thr@@\fontdimen8 \scriptfont\thr@@
 \lineskiplimit\lineskip
 \vbox\bgroup\ialign\bgroup\hfil$\m@th\scriptstyle{##}$\hfil\crcr}%
\def\Sb{_\multilimits@}%
\def\endSb{\crcr\egroup\egroup\egroup}%
\def\Sp{^\multilimits@}%
\newdimen\ex@
\def\rightarrowfill@#1{$#1\m@th\mathord-\mkern-6mu\cleaders
 \hbox{$#1\mkern-2mu\mathord-\mkern-2mu$}\hfill
 \mkern-6mu\mathord\rightarrow$}%
\def\leftarrowfill@#1{$#1\m@th\mathord\leftarrow\mkern-6mu\cleaders
 \hbox{$#1\mkern-2mu\mathord-\mkern-2mu$}\hfill\mkern-6mu\mathord-$}%
\def\leftrightarrowfill@#1{$#1\m@th\mathord\leftarrow
\mkern-6mu\cleaders
 \hbox{$#1\mkern-2mu\mathord-\mkern-2mu$}\hfill
 \mkern-6mu\mathord\rightarrow$}%
\def\overrightarrow{\mathpalette\overrightarrow@}%
\def\overrightarrow@#1#2{\vbox{\ialign{##\crcr\rightarrowfill@#1\crcr
 \noalign{\kern-\ex@\nointerlineskip}$\m@th\hfil#1#2\hfil$\crcr}}}%
\def\overleftarrow{\mathpalette\overleftarrow@}%
\def\overleftarrow@#1#2{\vbox{\ialign{##\crcr\leftarrowfill@#1\crcr
 \noalign{\kern-\ex@\nointerlineskip}$\m@th\hfil#1#2\hfil$\crcr}}}%
\def\overleftrightarrow{\mathpalette\overleftrightarrow@}%
\def\overleftrightarrow@#1#2{\vbox{\ialign{##\crcr
   \leftrightarrowfill@#1\crcr
 \noalign{\kern-\ex@\nointerlineskip}$\m@th\hfil#1#2\hfil$\crcr}}}%
\def\underrightarrow{\mathpalette\underrightarrow@}%
\def\underrightarrow@#1#2{\vtop{\ialign{##\crcr$\m@th\hfil#1#2\hfil
  $\crcr\noalign{\nointerlineskip}\rightarrowfill@#1\crcr}}}%
\def\underleftarrow{\mathpalette\underleftarrow@}%
\def\underleftarrow@#1#2{\vtop{\ialign{##\crcr$\m@th\hfil#1#2\hfil
  $\crcr\noalign{\nointerlineskip}\leftarrowfill@#1\crcr}}}%
\def\underleftrightarrow{\mathpalette\underleftrightarrow@}%
\def\underleftrightarrow@#1#2{\vtop{\ialign{##\crcr$\m@th
  \hfil#1#2\hfil$\crcr
 \noalign{\nointerlineskip}\leftrightarrowfill@#1\crcr}}}%
\def\qopnamewl@#1{\mathop{\operator@font#1}\nlimits@}
\let\nlimits@\displaylimits
\def\setboxz@h{\setbox\z@\hbox}
\def\varlim@#1#2{\mathop{\vtop{\ialign{##\crcr
 \hfil$#1\m@th\operator@font lim$\hfil\crcr
 \noalign{\nointerlineskip}#2#1\crcr
 \noalign{\nointerlineskip\kern-\ex@}\crcr}}}}
 \def\rightarrowfill@#1{\m@th\setboxz@h{$#1-$}\ht\z@\z@
  $#1\copy\z@\mkern-6mu\cleaders
  \hbox{$#1\mkern-2mu\box\z@\mkern-2mu$}\hfill
  \mkern-6mu\mathord\rightarrow$}
\def\leftarrowfill@#1{\m@th\setboxz@h{$#1-$}\ht\z@\z@
  $#1\mathord\leftarrow\mkern-6mu\cleaders
  \hbox{$#1\mkern-2mu\copy\z@\mkern-2mu$}\hfill
  \mkern-6mu\box\z@$}
\def\projlim{\qopnamewl@{proj\,lim}}
\def\injlim{\qopnamewl@{inj\,lim}}
\def\varinjlim{\mathpalette\varlim@\rightarrowfill@}
\def\varprojlim{\mathpalette\varlim@\leftarrowfill@}
\def\varliminf{\mathpalette\varliminf@{}}
\def\varliminf@#1{\mathop{\underline{\vrule\@depth.2\ex@\@width\z@
   \hbox{$#1\m@th\operator@font lim$}}}}
\def\varlimsup{\mathpalette\varlimsup@{}}
\def\varlimsup@#1{\mathop{\overline
  {\hbox{$#1\m@th\operator@font lim$}}}}
\def\align{\@verbatim \frenchspacing\@vobeyspaces \@alignverbatim
You are using the "align" environment in a style in which it is not defined.}
\let\csname endalign*\endcsname =\endtrivlist
\def\alignat{\@verbatim \frenchspacing\@vobeyspaces \@alignatverbatim
You are using the "alignat" environment in a style in which it is not defined.}
\let\csname endalignat*\endcsname =\endtrivlist
\def\xalignat{\@verbatim \frenchspacing\@vobeyspaces \@xalignatverbatim
You are using the "xalignat" environment in a style in which it is not defined.}
\let\csname endxalignat*\endcsname =\endtrivlist
\def\gather{\@verbatim \frenchspacing\@vobeyspaces \@gatherverbatim
You are using the "gather" environment in a style in which it is not defined.}
\let\csname endgather*\endcsname =\endtrivlist
\def\multiline{\@verbatim \frenchspacing\@vobeyspaces \@multilineverbatim
You are using the "multiline" environment in a style in which it is not defined.}
\let\csname endmultiline*\endcsname =\endtrivlist
\def\arrax{\@verbatim \frenchspacing\@vobeyspaces \@arraxverbatim
You are using a type of "array" construct that is only allowed in AmS-LaTeX.}
\def\tabulax{\@verbatim \frenchspacing\@vobeyspaces \@tabulaxverbatim
You are using a type of "tabular" construct that is only allowed in AmS-LaTeX.}
\let\csname endarrax*\endcsname =\endtrivlist
\let\csname endtabulax*\endcsname =\endtrivlist
 \def\endequation{%
     \ifmmode\ifinner % FLEQN hack
      \iftag@
        \addtocounter{equation}{-1} % undo the increment made in the begin part
        $\hfil
           \displaywidth\linewidth\@taggnum\egroup \endtrivlist
        \global\tag@false
        \global\@ignoretrue   
      \else
        $\hfil
           \displaywidth\linewidth\@eqnnum\egroup \endtrivlist
        \global\tag@false
        \global\@ignoretrue 
      \fi
     \else   
      \iftag@
        \addtocounter{equation}{-1} % undo the increment made in the begin part
        \eqno \hbox{\@taggnum}
        \global\tag@false%
        $$\global\@ignoretrue
      \else
        \eqno \hbox{\@eqnnum}% $$ BRACE MATCHING HACK
        $$\global\@ignoretrue
      \fi
     \fi\fi
 } 
 \newif\iftag@ \tag@false
 \def\TCItag{\@ifnextchar*{\@TCItagstar}{\@TCItag}}
 \def\@TCItag#1{%
     \global\tag@true
     \global\def\@taggnum{(#1)}%
     \global\def\@currentlabel{#1}}
 \def\@TCItagstar*#1{%
     \global\tag@true
     \global\def\@taggnum{#1}%
     \global\def\@currentlabel{#1}}
     \def\tag{\@ifnextchar*{\@tagstar}{\@tag}}
     \def\@tag#1{%
         \global\tag@true
         \global\def\@taggnum{(#1)}}
     \def\@tagstar*#1{%
         \global\tag@true
         \global\def\@taggnum{#1}}
\def\tfrac#1#2{{\textstyle {#1 \over #2}}}%
\def\dfrac#1#2{{\displaystyle {#1 \over #2}}}%
\begin{document}
\title[Local energy decay]{Local energy decay for the wave equation with a
nonlinear time dependent damping.}
\author{Ahmed BCHATNIA}
\address{A. BCHATNIA, Department of Mathematics, Faculty of Sciences of
Tunis, University of Tunis El Manar, Tunisia and University of DAMMAM, KSA}
\email{ahmed.bchatnia@fst.rnu.tn}
\author{Moez DAOULATLI}
\address{M. DAOULATLI, Department of Mathematics, Faculty of Sciences of
Bizerte, University of Carthage, Tunisia}
\email{moez.daoulatli@infcom.rnu.tn }
\date{\today }
\subjclass[2000]{Primary: 35L05, 35B40.}
\keywords{Wave equation, Nonlinear dissipation, internal damping, Decay
rates, time dependent dissipation.}

\begin{abstract}
This paper addresses a wave equation on a exterior domain in $\mathbb{R}^{d}$
($d$ odd) with nonlinear time dependent dissipation. Under a microlocal
geometric condition we prove that the decay rates of the local energy
functional are obtained by solving a nonlinear non-autonomous differential
equation.
\end{abstract}

\maketitle

\section{Introduction and Statement of the result}

Let $O$ be a compact domain of $%
%TCIMACRO{\U{211d} }%
%BeginExpansion
\mathbb{R}
%EndExpansion
^{d}$ $\left( d\geq 3\text{ is odd}\right) $ with $C^{\infty }$ boundary $%
\Gamma =\partial \Omega $ and $\Omega =\mathbb{R}^{d}\setminus O,$ $O\subset
B_{R}$ for some $R>0$. Consider the following wave equation with a nonlinear
time dependent damping 
\begin{equation}
\left\{ 
\begin{array}{lc}
\partial _{t}^{2}u-\Delta u+a\left( x\right) \rho \left( t\right) g\left(
\partial _{t}u\right) =0, & \text{in }\mathbb{R}_{+}\times \Omega , \\ 
u=0, & \text{on }\mathbb{R}_{+}\times \Gamma , \\ 
u\left( 0,x\right) =\varphi _{0}\quad \text{ and }\quad \partial _{t}u\left(
0,x\right) =\varphi _{1}. & 
\end{array}%
\right.  \label{sys:nonlinear}
\end{equation}%
Here $\Delta $ denotes the Laplace operator in the space variables.

The nonlinear terms satisfy:

\begin{itemize}
\item $a\left( x\right) $ is a non-negative function in $C^{\infty }\left(
\Omega \right) $ with compact support such that supp$a\subset B_{R}$.

\item $\rho $ is a positive, monotone and differentiable function on $%
%TCIMACRO{\U{211d} }%
%BeginExpansion
\mathbb{R}
%EndExpansion
_{+}:$ there exists a positive constant $C_{0}>0$ such that%
\begin{equation*}
\begin{array}{l}
\left\vert \rho ^{\prime }\left( t\right) \right\vert \leq C_{0}\rho \left(
t\right) \text{, for all }t\geq 0.%
\end{array}%
\end{equation*}%
Moreover, without loss of generality we assume that $\rho (0)=1.$

\item $g:\mathbb{R}\rightarrow \mathbb{R}$ is a continuous and monotone
increasing function with $g(0)=0$.
\end{itemize}

The natural space of initial data is $H=H_{D}\left( \Omega \right) \times
L^{2}\left( \Omega \right) $ which is the completion of $(C_{0}^{\infty
}\left( \Omega \right) )^{2}$\ with respect to the norm 
\begin{equation*}
\left\Vert \varphi \right\Vert _{H}^{2}=\left\Vert (\varphi _{0},\varphi
_{1})\right\Vert _{H}^{2}=\frac{1}{2}\int_{\Omega }\left\vert \nabla \varphi
_{0}\right\vert ^{2}+\left\vert \varphi _{1}\right\vert ^{2}dx.
\end{equation*}%
It is known that (see Lions--Strauss \cite{lions strauss}) under the
conditions above, the system $\left( \ref{sys:nonlinear}\right) $\ is well
posed in the space $H$, i.e., for any initial state $(u_{0},u_{1})\in H$
there exists a unique weak solution of $\left( \ref{sys:nonlinear}\right) $
such that%
\begin{equation*}
u\in C^{0}\left( 
%TCIMACRO{\U{211d} }%
%BeginExpansion
\mathbb{R}
%EndExpansion
_{+},H_{D}\left( \Omega \right) \right) ;\text{ }\partial _{t}u\in
C^{0}\left( 
%TCIMACRO{\U{211d} }%
%BeginExpansion
\mathbb{R}
%EndExpansion
_{+},L^{2}\left( \Omega \right) \right) .
\end{equation*}%
For every $t\in \mathbb{R}_{+}$, we define the evolution operator $U(t)$ by 
\begin{equation*}
\begin{array}{ccccc}
U(t): & H & \longrightarrow & H &  \\ 
& \left( u_{0},u_{1}\right) & \longmapsto & U\left( t\right) \left(
u_{0},u_{1}\right) & =\left( u\left( t\right) ,\partial _{t}u\left( t\right)
\right) ,%
\end{array}%
\end{equation*}%
where $u$ is the solution of $\left( \text{\ref{sys:nonlinear}}\right) $.
Let us consider the energy at instant $t$ defined by%
\begin{eqnarray*}
E_{u}\left( t\right) &=&\frac{1}{2}\int_{\Omega }\left( \left\vert \nabla
u\left( t,x\right) \right\vert ^{2}+\left\vert \partial _{t}u\left(
t,x\right) \right\vert ^{2}\right) dx \\
&=&\left\Vert U\left( t\right) \varphi \right\Vert _{H}^{2}.
\end{eqnarray*}%
We formally obtain the following identity 
\begin{equation}
E_{u}\left( T\right) +\int_{0}^{T}\int_{\Omega }a\left( x\right) g\left(
\partial _{t}u\right) \partial _{t}u\text{ }dx\text{ }dt=E_{u}\left(
0\right) ,  \label{energy inequality}
\end{equation}%
for every $T\geq 0$. We define the local energy by%
\begin{eqnarray*}
E_{r}\left( u\right) \left( t\right) &=&\frac{1}{2}\int_{\Omega \cap
B_{r}}\left( \left\vert \nabla u\left( t,x\right) \right\vert
^{2}+\left\vert \partial _{t}u\left( t,x\right) \right\vert ^{2}\right) dx \\
&=&\left\Vert \left( u\left( t\right) ,\partial _{t}u\left( t\right) \right)
\right\Vert _{H\left( B_{r}\right) }^{2}
\end{eqnarray*}%
where $B_{r}=\left\{ x\in 
%TCIMACRO{\U{211d} }%
%BeginExpansion
\mathbb{R}
%EndExpansion
^{d},\left\vert x\right\vert <r\right\} $, contains the obstacle $O$.

Our goal is to give the rate of decay of the local energy. Throughout the
paper we will frequently invoke the following notation%
\begin{equation*}
\Omega _{s,t}=[s,t]\times \Omega ,\text{ }t\geq s\geq 0\text{ and }\Omega
_{0,t}=\Omega _{t}.
\end{equation*}%
For convenience we also introduce the following weighted measure on $\Omega $%
: 
\begin{equation*}
\mathfrak{m}_{a}=a\left( x\right) dxdt.
\end{equation*}

The problem of the local energy decay for the wave equation and systems on
exterior domain has been intensively investigated during the last decades.
For the classical wave equation, the story goes up to the pioneering works
of Lax-Phillips \cite{la-ph}, and Morawetz, Strauss and Ralston \cite%
{mor-rals-strauss}. When the obstacle is trapping, Ralston \cite{ral} proved
that there is no uniform decay rate, and Morawetz-Ralston-Strauss \cite%
{mor-rals-strauss} and Melrose \cite{mel} obtained the exponential decay for
nontrapping obstacle. In \cite{burq}, without any assumption on the dynamics
Burq proved the logarithmic decay of the local energy with respect to any
Sobolev norm larger than the initial energy. Nakao in \cite{nakao1} proved
that the local energy decay exponentially if $d$ is odd and polynomially if $%
d$ is even under the Lions's geometric condition. Later, in \cite{alkh} and
for general obstacles, Aloui and Khenissi proved the exponential decay of
the local energy by mean of linear internal localized damping and Khenissi 
\cite{kh} proved the polynomial decay in even space dimension. For that,
they introduced the exterior geometric control condition (E.G.C.) inspired
from the so-called microlocal condition of Bardos-Lebeau-Rauch \cite{blr}
and they used in a crucial way the propagation properties of the microlocal
defect measures of G\'{e}rard \cite{ge1} (see also Lebeau \cite{lebeau}).
More recently in \cite{DAOU}, using a nonlinear internal localized damping,
Daoulatli obtained various decay rates depending on the behavior of the
damping term. Concerning the semilinear waves on unbounded domains, we quote
the work of Bchatnia and Daoulatli \cite{bcda}, which establishes an
exponential decay of the local energy for the solutions of subcritical wave
equation outside convex obstacle. We also mention the result of Daoulatli et
al \cite{mbm}\ on the energy decay rates of \ the local energy for the
elastic system with a nonlinear damping. Finally, we quote the following
results on the energy decay rates for the wave equation with time dependent
damping in bounded domain \cite{bella, daou 2, mart, mess, nakao}. However,
concerning the decay property of the local energy of wave equation with
nonlinear time dependent dissipation in exterior domains no results seem to
be known.

Now, we recall the exterior geometric condition of \cite{alkh}.

\begin{definition}[EGC]
Let $R>0$ such that $O\subset B_{R}$, $T_{R}>0$ and $\omega =\left\{ x\in
\Omega ;a\left( x\right) >0\right\} $. We say that $\left( \omega
,T_{R}\right) $ verifies the exterior geometric control condition on $B_{R}$ 
$\left( E.G.C\right) $, if every geodesic $\gamma $ starting from $B_{R}$ at
time $t=0$, is such that

$\cdot $ $\gamma $ leaves $%
%TCIMACRO{\U{211d} }%
%BeginExpansion
\mathbb{R}
%EndExpansion
_{+}\times B_{R}$ before the time $T_{R}$, or

$\cdot $ $\gamma $ meets $%
%TCIMACRO{\U{211d} }%
%BeginExpansion
\mathbb{R}
%EndExpansion
_{+}\times \omega $ between the times $0$ and $T_{R}$.
\end{definition}

In this paper, under the condition $\left( \text{EGC}\right) $, we give the
rate of decay of the local energy of solutions of the wave equation with
nonlinear time dependent dissipation in exterior domain. More precisely, the
rate of decay will be determined from the following nonlinear,
non-autonomous ODE:%
\begin{equation*}
\frac{dS}{dt}+q\left( t,S\left( t\right) \right) =0,\qquad S\left( 0\right)
=E_{u}\left( 0\right) ,
\end{equation*}%
where the function $q$ is defined in Subsection \ref{sec:aux functions}
below.

\subsection{Behavior of the dissipation at the origin and infinity\label%
{sec:h0}}

In order to characterize decay rates for the energy, we need to introduce
several special functions, which in turn will depend on the growth of $g$
near the origin and near infinity. For that purpose let $m_{0}\geq 1$, and
following \cite{las-tun}, we classify the behavior of $g$ \textit{near the
Origin}:

\begin{description}
\item[AO1] Linearly bounded on $I$:%
\begin{equation*}
\frac{1}{m_{0}}y^{2}\leq g\left( y\right) y\leq m_{0}y^{2},\quad y\in I,
\end{equation*}

\item[AO2] Superlinear on $I$:%
\begin{equation*}
g\left( y\right) y\leq m_{0}y^{2},\quad y\in I,
\end{equation*}

\item[AO3] Sublinear on $I$:%
\begin{equation*}
\frac{1}{m_{0}}y^{2}\leq g\left( y\right) y,\quad y\in I,
\end{equation*}%
where $I=\left[ -\eta ,\eta \right] $ and $0<\eta <1$.
\end{description}

We then define a concave function $h_{0}$ which will describe the growth of $%
g$ near the origin. Since $g$ is a continuous monotone increasing function
vanishing at zero, following \cite{las-tat} there exists a concave monotone
increasing function $h_{0}$ defined on $%
%TCIMACRO{\U{211d} }%
%BeginExpansion
\mathbb{R}
%EndExpansion
_{+}$ such that $h_{0}(0)=0$ and 
\begin{equation}
h_{0}\left( g\left( y\right) y\right) \geq \epsilon _{0}\left( g\left(
y\right) ^{2}+y^{2}\right) \quad \text{ for }\quad \left\vert y\right\vert
<\eta _{0},  \label{h0 definition}
\end{equation}%
for some $\epsilon _{0}$,\ $\eta _{0}>0$. For example when $g$ is
superlinear and odd, then $h_{0}^{-1}\left( s\right) =\sqrt{s}g\left( \sqrt{s%
}\right) $ when $\left\vert s\right\vert \leq \eta $. For further details on
the construction of such function we refer the interested reader to \cite%
{las-tat,daou 2,mid}.

We now assume that $g$ is linearly bounded at infinity:%
\begin{equation}
\frac{1}{m}y^{2}\leq g\left( y\right) y\leq my^{2},\quad \left\vert
y\right\vert \geq \eta _{0}.  \label{linearly definition}
\end{equation}%
We introduce some auxiliary functions $\alpha \left( t\right) $ and $\beta
\left( t\right) $ which are linked to the function $\rho $ as follows$:$%
\begin{equation*}
\begin{array}{l}
\beta \left( t\right) =\left\{ 
\begin{array}{ll}
\begin{array}{l}
\medskip \frac{1}{T}\rho \left( t+T\right)%
\end{array}
& \text{if }\rho \text{ is decreasing} \\ 
\left. 
\begin{array}{ll}
\medskip \frac{1}{T} & t<T \\ 
\medskip \frac{1}{T}\rho \left( t-T\right) & t\geq T%
\end{array}%
\right\} & \text{if }\rho \text{ is increasing}%
\end{array}%
\right.%
\end{array}%
\end{equation*}%
and%
\begin{equation*}
\begin{array}{l}
\alpha \left( t\right) =\left\{ 
\begin{array}{ll}
1 & \text{if }\rho \text{ is decreasing} \\ 
\rho ^{-2}\left( t+T\right) & \text{if }\rho \text{ is increasing,}%
\end{array}%
\right.%
\end{array}%
\end{equation*}%
here $T$ is a positive constant which will be precised in the statement of
Theorem 1.

\subsection{Auxiliary functions\label{sec:aux functions}}

Let $h_{0},$ $\alpha $ and $\beta $ be as defined above and set%
\begin{equation}
h=I+\mathfrak{m}_{a}\left( \Omega _{T}\right) h_{0}\circ \tfrac{I}{T%
\mathfrak{m}_{a}\left( \Omega _{T}\right) }.  \label{def:h}
\end{equation}%
We introduce, for $t\geq 0$,%
\begin{equation}
\begin{array}{l}
q\left( t,.\right) =\beta \left( t\right) h^{-1}\circ \frac{\alpha \left(
t\right) }{K}I,%
\end{array}
\label{def: q}
\end{equation}%
where $K$ is a positive constant such that $K\geq C_{T}$, here $C_{T}$ is
the constant that appears in the estimate $\left( \ref{ctdefinition}\right)
. $ We note that $C_{T}$ is independent of the initial data.

\section{The main result}

In this section we give the main result of this paper.

\begin{theorem}
\label{t:1}Let $T_{R}~$be such that $\left( \left\{ x\in \Omega ;a\left(
x\right) >0\right\} ,T_{R}\right) $ satisfies the exterior geometric
condition on $B_{R}$. Then there exist $T\geq T_{R}+9R$ and a positive
constant $C_{T}$ such that inequality%
\begin{equation}
E_{R}\left( u\right) \left( t\right) \leq S\left( t-T\right) ,\qquad \text{%
for all }t\geq T,  \label{energy decay formula}
\end{equation}%
holds for every solution $u$ of system (\ref{sys:nonlinear}) if the initial
data $\left( u_{0},u_{1}\right) $ in the energy space $H$ is compactly
supported in $B_{R}$. Here $S\left( t\right) $ is the solution of the
following nonlinear differential equation: 
\begin{equation}
\frac{dS}{dt}+q\left( t,S\left( t\right) \right) =0,\qquad S\left( 0\right)
=E_{u}\left( 0\right) ,  \label{sharp ODE}
\end{equation}%
and the function $q$ is defined in Subsection \ref{sec:aux functions} above.
Moreover, if for some $T_{0}\gg 1$%
\begin{equation}
\int_{T_{0}}^{t}q\left( s,\gamma \right) ds\underset{t\rightarrow +\infty }{%
\longrightarrow }\infty ,  \label{over under damping condition}
\end{equation}%
for every $0<\gamma \ll 1$, then%
\begin{equation*}
E_{R}\left( u\right) \left( t\right) \underset{t\rightarrow +\infty }{%
\longrightarrow }0.
\end{equation*}
\end{theorem}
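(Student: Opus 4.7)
The plan is to combine the microlocal observability provided by the exterior geometric condition (EGC) with a nonlinear analysis based on the concave companion $h_0$ of $g$, and a comparison argument for a non-autonomous ODE, in the spirit of Aloui--Khenissi \cite{alkh} and Lasiecka--Tataru \cite{las-tat}, adapted to accommodate the time-dependent factor $\rho(t)$.

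\textbf{Step 1 (Exterior observability).} First I would establish, for some $T\geq T_{R}+9R$, a stabilization estimate of the form
\begin{equation*}
E_{R}(u)(t+T)\leq C_{T}\int_{t}^{t+T}\!\!\int_{\Omega}a(x)\,\rho(s)\bigl(|g(\partial_{s}u)|^{2}+|\partial_{s}u|^{2}\bigr)\,dx\,ds,
\end{equation*}
with $C_{T}$ independent of the initial data and of $t$. This is the constant that enters in the definition of $q$. It is obtained by the standard contradiction argument using the propagation of microlocal defect measures of G\'erard and Lebeau: a failure of the estimate produces a normalized sequence of solutions whose defect measure is nontrivial, invariant under the generalized geodesic flow, and carried outside the support of $a$, contradicting EGC. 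Finite speed of propagation is what forces the constant $9R$ in the time threshold.

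\textbf{Step 2 (Nonlinear reduction via $h_{0}$).} Next I would split $\Omega_{t,t+T}$ according to whether $|\partial_{s}u|\leq\eta_{0}$. On the ``small velocity'' region, (\ref{h0 definition}) gives
\begin{equation*}
|g(\partial_{s}u)|^{2}+|\partial_{s}u|^{2}\leq \epsilon_{0}^{-1}\,h_{0}\bigl(g(\partial_{s}u)\,\partial_{s}u\bigr),
\end{equation*}
while on the complement the linear bound (\ref{linearly definition}) controls the integrand pointwise by $g(\partial_{s}u)\,\partial_{s}u$. Applying Jensen's inequality against the probability measure $\mathfrak{m}_{a}/\mathfrak{m}_{a}(\Omega_{T})$ and invoking the definition (\ref{def:h}) of $h$, one turns the right-hand side of Step~1 into an expression involving $h$ applied to the $a(x)\rho(s)$-weighted dissipation. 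The time factor $\rho(s)$ is then moved out of the integral at the cost of the weights $\alpha(t),\beta(t)$; here the derivative bound $|\rho'|\leq C_{0}\rho$ is essential, together with the distinction between increasing and decreasing $\rho$ that dictates the very choice of $\alpha,\beta$ in Subsection~\ref{sec:h0}. Together with the energy identity (\ref{energy inequality}), this yields the discrete inequality
\begin{equation*}
q\bigl(t,\,E_{R}(u)(t+T)\bigr)\leq E_{u}(t)-E_{u}(t+T).
\end{equation*}

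\textbf{Step 3 (ODE comparison).} Combining this with $E_{R}(u)(t+T)\leq E_{u}(t+T)$, iterating along the sequence $t_{n}=nT$, and invoking a standard discrete-to-continuous comparison lemma (see \cite{las-tat,DAOU,daou 2}), I would produce a non-increasing envelope for $E_{R}(u)$ that is pointwise dominated by the solution $S$ of (\ref{sharp ODE}). This gives $E_{R}(u)(t)\leq S(t-T)$ for all $t\geq T$.

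\textbf{Step 4 (Convergence to zero).} Under (\ref{over under damping condition}), suppose by contradiction that $S(t)\not\to 0$. Since $S$ is non-increasing there exists $\gamma>0$ with $S(t)\geq\gamma$ for every $t$; then monotonicity of $q(t,\cdot)$ and (\ref{sharp ODE}) give
\begin{equation*}
S(T_{0})-S(t)=\int_{T_{0}}^{t}q\bigl(s,S(s)\bigr)\,ds \geq \int_{T_{0}}^{t}q(s,\gamma)\,ds \to \infty \quad\text{as } t\to\infty,
\end{equation*}
contradicting $S\geq 0$. Hence $S(t)\to 0$, and Step~3 transfers this decay to $E_{R}(u)(t)$.

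The main technical obstacle I expect is concentrated in Step~2: coupling the time-dependent weight $\rho(t)$ (through the auxiliary functions $\alpha,\beta$) with the nonlinear Jensen step, while keeping careful track of the two monotonicity regimes of $\rho$ and of the different behavior of $g$ at zero and at infinity. The compensating bound $|\rho'|\leq C_{0}\rho$ is what makes it possible to absorb the errors produced when pulling $\rho(s)$ outside the weighted integrals; without it, the time weight and the Jensen step would not combine into the clean form $q(t,\cdot)$ of Subsection~\ref{sec:aux functions}.
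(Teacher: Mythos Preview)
Your outline diverges from the paper's argument at a structural level, and two of the gaps look genuine rather than cosmetic.

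First, Step~1 as stated does not go through. A defect-measure contradiction argument is designed for a fixed linear equation; here the equation is nonlinear and, through $\rho(t)$, non-autonomous. If along a contradicting sequence the base times $t_n\to\infty$ and $\rho$ is decreasing, the translated equations $\partial_s^2 u-\Delta u+a\rho(t_n+s)g(\partial_s u)=0$ degenerate toward the \emph{undamped} wave equation, for which EGC alone (without nontrapping) gives you nothing. In particular an inequality with $\rho(s)$ inside the right-hand side and a constant $C_T$ independent of $t$ cannot hold when $\rho\to 0$. The paper avoids this entirely: it never proves an observability for the nonlinear system. Instead it works inside the Lax--Phillips space $K=(D_+^R)^\perp\cap(D_-^R)^\perp$, uses Aloui--Khenissi's contraction $\|Z_L(T)\|\le c_0<1$ for the \emph{linear, time-independent} damped group $U_L$, and compares $U(t)$ with $U_L$ on $K$. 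This produces a ``mixed'' inequality in which the right-hand side carries cross terms $\rho(s)\,g(\partial_s u)\,\partial_s v$ and $\partial_s u\,\partial_s v$, with $v$ the linear damped solution launched from $Z(t)\varphi$; those cross terms are then treated by Young and Jensen with $h_0$. Your Step~2 is close in spirit to this last part, but it is applied to the wrong observability inequality.

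Second, even granting Step~2, your Step~3 does not close. You obtain $q\bigl(t,E_R(u)(t+T)\bigr)\le E_u(t)-E_u(t+T)$, which mixes the local energy $E_R(u)$ and the total energy $E_u$. Since $q(t,\cdot)$ is increasing and $E_R(u)\le E_u$, the inequality goes the wrong way to deduce $q\bigl(t,E_u(t+T)\bigr)\le E_u(t)-E_u(t+T)$; and $E_R(u)(t)$ is not monotone, so it cannot itself play the role of $W$ in the Lasiecka--Tataru lemma. The paper resolves this by working throughout with the single quantity $\|Z(t)\varphi\|_H^2$: the mixed observability is stated for $\|Z(t)\varphi\|_H^2$, the dissipation bound is expressed as $\|Z(t)\varphi\|_H^2-\|Z(t+T)\varphi\|_H^2$ (via the auxiliary evolution $U_t(s)$ on $K$), and the resulting recursive inequality \eqref{ctdefinition} feeds directly into the time-dependent comparison lemma. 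The Lax--Phillips projection is thus not a technical nicety but the device that replaces the pair $(E_R,E_u)$ by a single functional with the right monotonicity. Your Step~4 is fine.
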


The proof of theorem will be stated in Section \ref{rate of decay}. In the
next section we give some applications of this result.

\section{Applications}

We recall first the following lemma useful to us to determine the rate of
decay.

\begin{lemma}[\protect\cite{daou 2}]
\label{lemma ode}

\begin{enumerate}
\item Let $\alpha _{1}$ is a positive, differentiable and decreasing
function on $%
%TCIMACRO{\U{211d} }%
%BeginExpansion
\mathbb{R}
%EndExpansion
_{+}$ and $\beta _{1}$ is a non-negative function on $%
%TCIMACRO{\U{211d} }%
%BeginExpansion
\mathbb{R}
%EndExpansion
_{+}.$ Let $S$ be a positive function verifying the following differential
inequality 
\begin{equation}
\frac{dS}{dt}+\beta _{1}\left( t\right) p\left( \alpha _{1}\left( t\right)
S\right) \leq 0,\text{ }S\left( 0\right) >0.  \label{equ: lemma ode}
\end{equation}%
We assume that $p$ is a strictly increasing function on $\left[ 0,\alpha
_{1}\left( 0\right) S\left( 0\right) \right] $ with $p\left( 0\right) =0$
and verifies%
\begin{equation*}
\begin{array}{ll}
p\left( x\right) \leq m^{-1}x & \text{for all }x\leq \alpha _{1}\left(
0\right) S\left( 0\right) \text{ and for some }m>0.%
\end{array}%
\end{equation*}%
Then we have%
\begin{equation}
S\left( t\right) \leq \frac{1}{\alpha _{1}\left( t\right) }\psi ^{-1}\left(
\int_{0}^{t}\alpha _{1}\left( s\right) \beta _{1}\left( s\right) ds-m\ln
\left( \frac{\alpha _{1}\left( t\right) }{\alpha _{1}\left( 0\right) }%
\right) \right) ,\forall t\geq 0,  \label{ode solution}
\end{equation}%
where%
\begin{equation*}
\psi \left( x\right) =\int_{x}^{\alpha _{1}\left( 0\right) S\left( 0\right) }%
\frac{ds}{p\left( s\right) }.
\end{equation*}

\item If in addition the function $p$ satisfies the following property%
\begin{equation}
p\left( \alpha _{1}\left( t\right) x\right) \geq mp\left( x\right) p\left(
\alpha _{1}\left( t\right) \right) ,\text{ }\forall t\geq 0,\text{ and }x\in %
\left[ 0,S\left( 0\right) \right]   \label{Lem:p lower bound}
\end{equation}%
for some $m>0$ and $p$ is a strictly increasing function on $\left[
0,S\left( 0\right) \right] $, then the function $S$ of $\left( \text{\ref%
{equ: lemma ode}}\right) $, verifies%
\begin{equation*}
S\left( t\right) \leq \psi ^{-1}\left( \int_{0}^{t}mp\left( \alpha
_{1}\left( s\right) \right) \beta _{1}\left( s\right) ds\right) ,\forall
t\geq 0
\end{equation*}%
where%
\begin{equation*}
\psi \left( x\right) =\int_{x}^{S\left( 0\right) }\frac{ds}{p\left( s\right) 
}.
\end{equation*}
\end{enumerate}
\end{lemma}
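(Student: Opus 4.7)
The plan is a classical ODE comparison argument: reduce the differential inequality to one of the form $G'/p(G)\le -\lambda(t)$, integrate, and invert the decreasing antiderivative $\psi$. A preliminary observation sets up everything: since $\beta_{1},p\ge 0$, the hypothesis forces $S'\le 0$, so $S(t)\le S(0)$; combined with $\alpha_{1}$ decreasing, the auxiliary quantity $F(t):=\alpha_{1}(t)S(t)$ satisfies $F(t)\le F(0)=\alpha_{1}(0)S(0)$. This keeps every argument of $p$ inside the interval on which $p$ is strictly increasing, hence $\psi^{-1}$ is unambiguously defined on the ranges that arise.

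For part (1), I would differentiate $F=\alpha_{1}S$ to obtain
\begin{equation*}
F'=\alpha_{1}'S+\alpha_{1}S'\le \frac{\alpha_{1}'}{\alpha_{1}}F-\alpha_{1}\beta_{1}p(F).
\end{equation*}
The hypothesis $p(x)\le x/m$ on $[0,F(0)]$ rearranges to $F\ge m\,p(F)$; multiplying by the nonpositive factor $\alpha_{1}'/\alpha_{1}$ reverses the inequality and gives $(\alpha_{1}'/\alpha_{1})F\le m(\alpha_{1}'/\alpha_{1})p(F)$. Substituting yields
\begin{equation*}
F'\le -\Bigl(\alpha_{1}\beta_{1}-m\frac{\alpha_{1}'}{\alpha_{1}}\Bigr)p(F),
\end{equation*}
where both terms in parentheses are nonnegative. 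Dividing by $p(F)>0$, using $\frac{d}{dt}\psi(F(t))=-F'(t)/p(F(t))$ and $\psi(F(0))=0$, and integrating on $[0,t]$ gives
\begin{equation*}
\psi(F(t))\ge \int_{0}^{t}\alpha_{1}(s)\beta_{1}(s)\,ds-m\ln\!\left(\frac{\alpha_{1}(t)}{\alpha_{1}(0)}\right).
\end{equation*}
Applying $\psi^{-1}$ (decreasing because $\psi'=-1/p<0$) and dividing by $\alpha_{1}(t)$ produces the stated estimate.

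For part (2), I would bypass the $F$ substitution. Since $S(t)\in[0,S(0)]$, the multiplicative lower bound on $p$ gives $p(\alpha_{1}S)\ge m\,p(\alpha_{1})p(S)$, so
\begin{equation*}
S'\le -m\,\beta_{1}(t)\,p(\alpha_{1}(t))\,p(S).
\end{equation*}
With the new $\psi(x)=\int_{x}^{S(0)}ds/p(s)$, the same divide-and-integrate step yields $\psi(S(t))\ge m\int_{0}^{t}\beta_{1}(s)p(\alpha_{1}(s))\,ds$, and inverting $\psi$ gives the second bound.

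The work is bookkeeping rather than a deep idea. The main subtlety I expect to handle carefully is twofold: first, verifying that all quantities remain in the domains where the hypotheses apply (this is where $S$ nonincreasing, $\alpha_{1}$ decreasing, and the correct endpoints of $\psi$ interact), and second, the sign flip when multiplying the bound $F\ge m\,p(F)$ by the nonpositive factor $\alpha_{1}'/\alpha_{1}$ in part (1). Both are routine but must be tracked to keep all inequalities correctly oriented; once that is done, the conclusions follow from the fundamental theorem of calculus and the monotonicity of $\psi^{-1}$.
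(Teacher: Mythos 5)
Your proof is correct, and there is nothing in the paper to compare it against: the lemma is stated without proof, imported verbatim from \cite{daou 2}, and the substitution $F(t)=\alpha_{1}(t)S(t)$ followed by the division by $p(F)$ and integration of $\psi$ is exactly the standard comparison argument used there. The two points you flag as the only subtleties are indeed the right ones (the invariance $F(t)\leq\alpha_{1}(0)S(0)$ keeping $p$ on its monotonicity interval, and the orientation reversal when multiplying $F\geq m\,p(F)$ by $\alpha_{1}'/\alpha_{1}\leq 0$), and your derivation handles both; note also that since your integrated inequality reads $\psi(F(t))\geq\int_{0}^{t}\alpha_{1}\beta_{1}\,ds-m\ln\left(\alpha_{1}(t)/\alpha_{1}(0)\right)$, the right-hand side automatically lies in the range of $\psi$ as long as $F(t)>0$, so $\psi^{-1}$ is well defined wherever you apply it.
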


\begin{remark}
The rate of decay of the energy depends on $\alpha ,$ $\beta $ and the
behavior of $h^{-1}$ near zero. To determine it, we only have to find $%
0<\epsilon _{0}\leq 1$, such that%
\begin{equation*}
\chi \left( s\right) \leq h^{-1}\left( s\right) \text{, for every }s\leq
\epsilon _{0},
\end{equation*}%
with%
\begin{equation*}
\begin{array}{l}
\chi \left( s\right) =C_{1}h_{0}^{-1}\left( \frac{s}{2C_{2}}\right) \text{,
for every }s\leq \epsilon _{0},%
\end{array}%
\end{equation*}%
where 
\begin{equation*}
C_{1}=\min \left( T\mathfrak{m}_{a}\left( \Omega \right) ,1\right) \text{
and }C_{2}=\max \left( T\mathfrak{m}_{a}\left( \Omega \right) ,1\right) .
\end{equation*}
\end{remark}

In the sequel $c$ denotes a positive constant which is independent of the
initial data. Moreover, if $\rho $ is increasing, we suppose that there
exist $t_{0}$, $c_{0}>0$, such that 
\begin{equation}
\begin{array}{l}
\rho \left( t-2T\right) \geq c_{0}\rho \left( t\right)%
\end{array}%
\text{ for every }t\geq t_{0}.  \label{ro increasing infinity condition}
\end{equation}

\subsection{The linear case}

Let $g\left( s\right) =s$, $0\leq |s|<1$. According to (\ref{h0 definition}%
), the auxiliary function $h$ is defined as $h(y)=2y$. Then 
\begin{equation*}
\psi \left( x\right) =2\ln \left( \frac{\left\Vert \varphi \right\Vert
_{H}^{2}}{x}\right)
\end{equation*}

\begin{enumerate}
\item $\rho $ is decreasing: $\alpha _{1}\left( t\right) =\frac{1}{K}$ and $%
\beta _{1}\left( t\right) =\frac{_{1}}{T}\rho \left( t+T\right) $. As a
result, using Theorem \ref{t:1} and some computations, the estimate 
\begin{equation*}
E_{R}\left( u\right) \left( t\right) \leq S(t-T),\qquad t\geq T
\end{equation*}%
becomes%
\begin{equation*}
\begin{array}{c}
E_{R}\left( u\right) \left( t\right) \leq c\left\Vert \varphi \right\Vert
_{H}^{2}\exp \left( -\frac{1}{KT}\int_{0}^{t}\rho \left( s\right) ds\right) ,%
\text{ for all }t\geq 0.%
\end{array}%
\end{equation*}

\item $\rho $ is increasing: $\alpha _{1}\left( t\right) =\frac{1}{K\rho
^{2}\left( t+T\right) }$ and $\beta _{1}\left( t\right) =\frac{1}{T}\rho
\left( t-T\right) $, $t\geq T$. Then%
\begin{equation*}
\begin{array}{c}
E_{R}\left( u\right) \left( t\right) \leq c\left\Vert \varphi \right\Vert
_{H}^{2}\exp \left( -\frac{1}{KT}\int_{2T}^{t}\frac{\rho \left( s-2T\right) 
}{\rho ^{2}\left( s\right) }ds\right) ,\text{for all }t\geq 2T.%
\end{array}%
\end{equation*}%
Now, using $\left( \text{\ref{ro increasing infinity condition}}\right) $
and making some arrangement, we obtain%
\begin{equation*}
\begin{array}{c}
E_{R}\left( u\right) \left( t\right) \leq c\left\Vert \varphi \right\Vert
_{H}^{2}\exp \left( -\frac{c_{0}}{KT}\int_{0}^{t}\rho ^{-1}\left( s\right)
ds\right) ,\text{ for all }t\geq 0.%
\end{array}%
\end{equation*}
\end{enumerate}

\begin{remark}
An important special case of $\left( \text{\ref{sys:nonlinear}}\right) $ is
when $\rho \left( t\right) =\left( 1+t\right) ^{\tau }$, $\tau \in 
%TCIMACRO{\U{211d} }%
%BeginExpansion
\mathbb{R}
%EndExpansion
$. We have%
\begin{equation*}
\begin{array}{ll}
\medskip E_{R}\left( u\right) \left( t\right) \leq cE_{u}\left( 0\right)
\exp \left( -c_{K}\left( 1+t\right) ^{1-\left\vert \tau \right\vert }\right)
& \left\vert \tau \right\vert <1, \\ 
\medskip E_{R}\left( u\right) \left( t\right) \leq cE_{u}\left( 0\right)
\left( 1+t\right) ^{-\frac{1}{KT}} & \tau =-1, \\ 
\medskip E_{R}\left( u\right) \left( t\right) \leq cE_{u}\left( 0\right)
\left( 1+t\right) ^{-\frac{c_{0}}{KT}} & \tau =1,%
\end{array}%
\end{equation*}%
for every $t\geq 0$. Moreover, it is clear that, we cannot obtain the decay
to zero of the energy when $\left\vert \tau \right\vert >1$.
\end{remark}

\subsection{The nonlinear case}

Since $g$ is linearly bounded near infinity, there exists a constant $A>0$,
such that%
\begin{equation}
\begin{array}{l}
T\mathfrak{m}_{a}\left( \Omega \right) h_{0}^{-1}\left( \frac{s}{2T\mathfrak{%
m}_{a}\left( \Omega \right) }\right) \leq h^{-1}\left( s\right) \leq s\text{%
, for every }s\leq A.%
\end{array}
\label{h-1 approx g linear near infinity}
\end{equation}%
So the rate of decay of the energy depends only on the behavior of $g$ near
the origin.

\begin{example}[Superlinear polynomial damping near the origin]
\label{example:superlinear at origin} Suppose $g\left( s\right)
=s|s|^{r_{0}-1}$, $0\leq \left\vert s\right\vert <1$ for some $r_{0}>1$. The
auxiliary function $h_{0}$ which may be defined as%
\begin{equation*}
h_{0}^{-1}\left( s\right) =s^{\frac{1+r_{0}}{2}},\text{ for }s\in \left[ 0,1%
\right] .
\end{equation*}%
Consequently, we obtain%
\begin{equation*}
\begin{array}{l}
\psi \left( x\right) \leq \frac{2}{r_{0}-1}\left( x^{\frac{1-r_{0}}{2}%
}-\left\Vert \varphi \right\Vert _{H}^{1-r_{0}}\right) .%
\end{array}%
\end{equation*}

\begin{enumerate}
\item $\rho $ decreasing. Let $\alpha _{1}\left( t\right) =\left( 2TK\right)
^{-1}$ and $\beta _{1}\left( t\right) =\rho \left( \left( t+1\right)
T\right) $. Then we deduce that%
\begin{equation*}
\begin{array}{c}
E_{R}\left( u\right) \left( t\right) \leq \left\Vert \varphi \right\Vert
_{H}^{2}\left( 1+\left( \frac{\left\Vert \varphi \right\Vert _{H}^{2}}{2TK}%
\right) ^{\frac{r_{0}-1}{2}}\frac{r_{0}-1}{4KT}\int_{T}^{t}\rho \left(
s\right) ds\right) ^{-\frac{2}{r_{0}-1}};\qquad t\geq T.%
\end{array}%
\end{equation*}%
After some arrangement and by choosing $K$ big enough, we obtain%
\begin{equation*}
\begin{array}{c}
E_{R}\left( u\right) \left( t\right) \leq c\left\Vert \varphi \right\Vert
_{H}^{2}\left( 1+\left( \frac{\left\Vert \varphi \right\Vert _{H}^{2}}{K}%
\right) ^{\frac{r_{0}-1}{2}}\frac{c}{KT}\int_{0}^{t}\rho \left( s\right)
ds\right) ^{-\frac{2}{r_{0}-1}},\text{ }t\geq 0.%
\end{array}%
\end{equation*}

\item $\rho $ increasing. Let $\alpha _{1}\left( t\right) =\left( 2TK\rho
^{2}\left( t+T\right) \right) ^{-1}$ and $\beta _{1}\left( t\right) =\rho
\left( t-T\right) $. Then,%
\begin{equation*}
\begin{array}{c}
E_{R}\left( u\right) \left( t\right) \leq c\left\Vert \varphi \right\Vert
_{H}^{2}\left( 1+\left( \frac{\left\Vert \varphi \right\Vert _{H}^{2}}{K}%
\right) ^{\frac{r_{0}-1}{2}}\frac{c}{KT}\int_{2T}^{t}\frac{\rho \left(
s-2T\right) }{\left( \rho \left( s\right) \right) ^{r_{0}+1}}ds\right) ^{-%
\frac{2}{r_{0}-1}},%
\end{array}%
\end{equation*}%
for $t\geq 2T$.

Using $\left( \text{\ref{ro increasing infinity condition}}\right) $ and
after some computation, we obtain%
\begin{equation*}
\begin{array}{c}
E_{R}\left( u\right) \left( t\right) \leq c\left\Vert \varphi \right\Vert
_{H}^{2}\left( 1+\left( \frac{\left\Vert \varphi \right\Vert _{H}^{2}}{K}%
\right) ^{\frac{r_{0}-1}{2}}\frac{c}{KT}\int_{0}^{t}\left( \rho \left(
s\right) \right) ^{-r_{0}}ds\right) ^{-\frac{2}{r_{0}-1}},\text{ }t\geq 0.%
\end{array}%
\end{equation*}
\end{enumerate}
\end{example}

\begin{remark}
Take $\rho \left( t\right) =\left( 1+t\right) ^{\tau }$, $\tau \in \left[ -1,%
\frac{1}{r_{0}}\right] $. We have 
\begin{equation*}
\begin{array}{ll}
\medskip E_{R}\left( u\right) \left( t\right) \leq C_{K}\left( \ln \left(
2+t\right) \right) ^{-\frac{2}{r_{0}-1}}, & \tau =-1\text{ or }\tau =\frac{1%
}{r_{0}}, \\ 
E_{R}\left( u\right) \left( t\right) \leq C_{K}\left( 1+t\right) ^{\mu }, & 
\tau \in \left] -1,\frac{1}{r_{0}}\right[ ,%
\end{array}%
\end{equation*}%
for every $t\geq 0$, with%
\begin{equation*}
\begin{array}{ll}
\medskip \mu =-\frac{2}{r_{0}-1}\left( 1+\tau \right) , & -1<\tau \leq 0, \\ 
\mu =-\frac{2}{r_{0}-1}\left( 1-\tau r_{0}\right) , & 0\leq \tau <\frac{1}{%
r_{0}}.%
\end{array}%
\end{equation*}
\end{remark}

\begin{example}[Sublinear near the origin]
Assume $g\left( s\right) =s\left\vert s\right\vert ^{\theta _{0}-1}$, $0\leq
\left\vert s\right\vert <1$, $\theta _{0}\in \left( 0,1\right) $. We have%
\begin{equation*}
h_{0}^{-1}(s)=s^{\frac{1+\theta _{0}}{2\theta _{0}}},\text{ for }s\in \left[
0,1\right] .
\end{equation*}%
Consequently, we infer that 
\begin{equation*}
\begin{array}{l}
\psi \left( x\right) \leq \frac{2\theta _{0}}{1-\theta _{0}}\left( x^{-\frac{%
1-\theta _{0}}{2\theta _{0}}}-\left\Vert \varphi \right\Vert _{H}^{-\frac{%
1-\theta _{0}}{\theta _{0}}}\right) .%
\end{array}%
\end{equation*}

\begin{enumerate}
\item $\rho $ decreasing. $\alpha _{1}\left( t\right) =\left( 2TK\right)
^{-1}$ and $\beta _{1}\left( t\right) =\rho \left( t+T\right) $. Similarly
as in example 1, we obtain%
\begin{equation*}
\begin{array}{c}
E_{R}\left( u\right) \left( t\right) \leq c\left\Vert \varphi \right\Vert
_{H}^{2}\left( 1+\left( \frac{\left\Vert \varphi \right\Vert _{H}^{2}}{K}%
\right) ^{\left( 1-\theta _{0}\right) /2\theta _{0}}\frac{c}{KT}%
\int_{0}^{t}\rho \left( s\right) ds\right) ^{-2\theta _{0}/\left( 1-\theta
_{0}\right) },\ t\geq 0.%
\end{array}%
\end{equation*}

\item $\rho $ increasing. $\alpha _{1}\left( t\right) =\left( 2TK\rho
^{2}\left( \left( t+1\right) T\right) \right) ^{-1}$ and $\beta _{1}\left(
t\right) =\rho \left( t-T\right) $. Then,$\ $for all $t\geq 0$%
\begin{equation*}
\begin{array}{c}
E_{R}\left( u\right) \left( t\right) \leq c\left\Vert \varphi \right\Vert
_{H}^{2}\left( 1+\left( \frac{\left\Vert \varphi \right\Vert _{H}^{2}}{K}%
\right) ^{\left( 1-\theta _{0}\right) /2\theta _{0}}\frac{c}{KT}%
\int_{0}^{t}\left( \rho \left( s\right) \right) ^{-1/\theta _{0}}ds\right)
^{-2\theta _{0}/\left( 1-\theta _{0}\right) }.%
\end{array}%
\end{equation*}
\end{enumerate}
\end{example}

\begin{remark}
Take $\rho \left( t\right) =\left( 1+t\right) ^{\tau }$, $\tau \in \left[
-1,\theta _{0}\right] $. We have%
\begin{equation*}
\begin{array}{ll}
\medskip E_{R}\left( u\right) \left( t\right) \leq C_{K}\left( \ln \left(
2+t\right) \right) ^{-2\theta _{0}/\left( 1-\theta _{0}\right) }, & \tau =-1%
\text{ or }\tau =\theta _{0}, \\ 
E_{R}\left( u\right) \left( t\right) \leq C_{K}\left( 1+t\right) ^{\mu }, & 
\tau \in \left] -1,\theta _{0}\right[ ,%
\end{array}%
\end{equation*}%
for every $t\geq 0$, with%
\begin{equation*}
\begin{array}{ll}
\medskip \mu =-\frac{2\theta _{0}}{1-\theta _{0}}\left( 1+\tau \right) , & 
-1<\tau \leq 0, \\ 
\mu =-\frac{2\theta _{0}}{1-\theta _{0}}\left( 1-\frac{\tau }{\theta _{0}}%
\right) , & 0\leq \tau <\theta _{0}.%
\end{array}%
\end{equation*}
\end{remark}

\begin{example}[Exponential damping at the origin]
\label{example:exp at origin} $g\left( s\right) =se^{-1/s^{2}}$, $%
0<\left\vert s\right\vert <1$. We take 
\begin{equation}
h_{0}^{-1}(s)=se^{-1/s},\text{ for }s\in \left[ 0,1\right]
\label{Ex: h 0 -1 exp origin def}
\end{equation}

We assume that $\rho $ is decreasing. Let $\alpha _{1}\left( t\right)
=\left( 2TK\right) ^{-1}$ and $\beta _{1}\left( t\right) =\rho \left(
t+T\right) $. Then 
\begin{equation*}
\psi \left( x\right) \leq \left\Vert \varphi \right\Vert _{H}^{2}\left( \exp
\left( \frac{1}{x}\right) -\exp \left( \frac{2TK}{\left\Vert \varphi
\right\Vert _{H}^{2}}\right) \right) .
\end{equation*}%
Consequently, we find%
\begin{equation*}
\begin{array}{c}
E_{R}\left( u\right) \left( t\right) \leq 2TK\left[ \ln \left( \frac{1}{%
2KT\left\Vert \varphi \right\Vert _{H}^{2}}\int_{T}^{t}\rho \left( s\right)
ds+\exp \left( \frac{2TK}{\left\Vert \varphi \right\Vert _{H}^{2}}\right)
\right) \right] ^{-1},\quad t\geq T,%
\end{array}%
\end{equation*}%
and we deduce that%
\begin{equation*}
\begin{array}{l}
E_{R}\left( u\right) \left( t\right) \leq cK\left( \ln \left( \frac{1}{%
2KT\left\Vert \varphi \right\Vert _{H}^{2}}\int_{0}^{t}\rho \left( s\right)
ds+2\right) \right) ^{-1},\qquad t\geq 0.%
\end{array}%
\end{equation*}
\end{example}

\begin{remark}
Take $\rho \left( t\right) =\left( 1+t\right) ^{\tau }$, $\tau \in \left[
-1,0\right] $. We have 
\begin{equation*}
\begin{array}{ll}
\medskip E_{R}\left( u\right) \left( t\right) \leq C_{K}\left( \ln \left( 
\frac{\ln \left( 1+t\right) }{KT}+2\right) \right) ^{-1}, & \tau =-1, \\ 
E_{R}\left( u\right) \left( t\right) \leq C_{K}\left( \ln \left( \frac{1+t}{%
KT}+2\right) \right) ^{-1}, & \tau \in \left] -1,0\right] ,%
\end{array}%
\end{equation*}%
for every $t\geq 0$.
\end{remark}

\section{Lax-Phillips theory and preliminary results\label{lax-phil}}

This section is devoted to some results on the Lax-Phillips Theory \cite%
{la-ph}, which are useful for the definition and the essential properties of
the Lax-Phillips semi-group.

Let us consider the free wave equation 
\begin{equation}
\left\{ 
\begin{array}{lc}
\partial _{t}^{2}u-\Delta u=0, & \text{in }\mathbb{R\times R}^{d}, \\ 
u\left( 0,.\right) =\varphi _{0}\text{\ , }\partial _{t}u\left( 0,.\right)
=\varphi _{1}. & 
\end{array}%
\right. .  \label{sys lib}
\end{equation}%
\ We recall that the solution of $\left( \text{\ref{sys lib}}\right) $ is
given by the propagator%
\begin{equation}
U_{0}\left( t\right) :H_{0}\ni \varphi =\left( \varphi _{0},\varphi
_{1}\right) \rightarrow U_{0}\left( t\right) \varphi =\left( u,\partial
_{t}u\right) \in H_{0}.  \label{linear propagator}
\end{equation}%
where $H_{0}$ is the \ completion\ of $(C_{0}^{\infty }(\mathbb{R}^{d}))%
%TCIMACRO{\U{b2}}%
%BeginExpansion
{{}^2}%
%EndExpansion
$ with respect to the norm%
\begin{equation*}
\left\Vert \varphi \right\Vert _{H_{0}}^{2}=\frac{1}{2}\int_{\mathbb{R}%
^{d}}(\left\vert \nabla \varphi _{1}\right\vert 
%TCIMACRO{\U{b2}}%
%BeginExpansion
{{}^2}%
%EndExpansion
+\left\vert \varphi _{2}\right\vert 
%TCIMACRO{\U{b2}}%
%BeginExpansion
{{}^2}%
%EndExpansion
)dx.
\end{equation*}%
Following Lax and Phillips \cite{la-ph}, we denote : 
\begin{equation*}
D_{+}^{0}=\left\{ \varphi =\left( \varphi _{0},\varphi _{1}\right) \in H_{0}%
\text{ ; }U_{0}(t)\varphi =0\text{ on }\left\vert x\right\vert <t\text{ },%
\text{ }t\geq 0\right\} ,
\end{equation*}%
the space of outgoing data, and 
\begin{equation*}
D_{-}^{0}=\left\{ \varphi =\left( \varphi _{0},\varphi _{1}\right) \in H_{0}%
\text{ ; }U_{0}(t)\varphi =0\text{ on }\left\vert x\right\vert <-t\text{ },%
\text{ }t\leq 0\right\} ,
\end{equation*}%
the space of incoming data associated to the solutions of $\left( \text{\ref%
{sys lib}}\right) $. We consider the wave equation in the exterior domain $%
\Omega $.%
\begin{equation}
\left\{ 
\begin{array}{lc}
\partial _{t}^{2}u-\Delta u=0\text{\ } & 
%TCIMACRO{\U{211d} }%
%BeginExpansion
\mathbb{R}
%EndExpansion
_{+}\times \Omega , \\ 
u=0 & 
%TCIMACRO{\U{211d} }%
%BeginExpansion
\mathbb{R}
%EndExpansion
_{+}\times \Gamma , \\ 
u\left( 0,x\right) =\varphi _{0}\text{ and }u_{t}\left( 0,x\right) =\varphi
_{0}. & 
\end{array}%
\right.  \label{Sys lin ext}
\end{equation}%
We denote $U_{D}(t)$ the linear wave group, defining the solution of $\left( 
\text{\ref{Sys lin ext}}\right) $%
\begin{equation*}
\begin{array}{ccccc}
U_{D}(t): & H & \longrightarrow & H &  \\ 
& \left( \varphi _{0},\varphi _{1}\right) & \longmapsto & U_{D}\left(
t\right) \left( \varphi _{0},\varphi _{1}\right) & =\left( u\left( t\right)
,\partial _{t}u\left( t\right) \right)%
\end{array}%
\end{equation*}%
Let us consider the wave equation in exterior domain%
\begin{equation}
\left\{ 
\begin{array}{lc}
\partial _{t}^{2}u-\Delta u+a\left( x\right) \partial _{t}u=0 & \text{in }%
\mathbb{R}_{+}\times \Omega , \\ 
u=0 & \text{on }\mathbb{R}_{+}\times \Gamma , \\ 
u\left( 0,x\right) =\varphi _{0}\quad \text{ and }\quad \partial _{t}u\left(
0,x\right) =\varphi _{1}, & 
\end{array}%
\right. ,  \label{sys linear}
\end{equation}%
where $\left( \varphi _{0},\varphi _{1}\right) \in H$.

We denote $U_{L}(t)$ the linear wave group, defining the solution of $\left( 
\text{\ref{sys linear}}\right) $%
\begin{equation}
\begin{array}{ccccc}
U_{L}(t): & H & \longrightarrow & H &  \\ 
& \left( \varphi _{0},\varphi _{1}\right) & \longmapsto & U_{L}\left(
t\right) \left( \varphi _{0},\varphi _{1}\right) & =\left( u\left( t\right)
,\partial _{t}u\left( t\right) \right)%
\end{array}
\label{linear exterior propagator}
\end{equation}

We choose $R>0$ such that $B_{R}$ contains the obstacle $O$. Then we define
spaces of outgoing and incoming data associated to solutions of problem $%
\left( \text{\ref{Sys lin ext}}\right) $ by%
\begin{equation}
D_{+}^{R}=\left\{ \varphi =\left( \varphi _{0},\varphi _{1}\right) \in H%
\text{ ; }U_{D}(t)\varphi =0\text{ on }\left\vert x\right\vert <t+R,\text{ }%
t\geq 0\right\} ,  \label{d + definition}
\end{equation}%
\begin{equation}
D_{-}^{R}=\left\{ \varphi =\left( \varphi _{0},\varphi _{1}\right) \in H%
\text{ ; }U_{D}(t)\varphi =0\text{ on }\left\vert x\right\vert <-t+R,\text{ }%
t\leq 0\right\} .  \label{d- definition}
\end{equation}%
These spaces satisfy the following properties:

\begin{enumerate}
\item $D_{+}^{R}$ and $D_{-}^{R}\ $are closed in $H$.

\item $D_{+}^{R}$ and $D_{-}^{R}$ are orthogonal and%
\begin{equation}
D_{+}^{R}\oplus D_{-}^{R}\oplus \left( \left( D_{+}^{R}\right) ^{\perp }\cap
\left( D_{-}^{R}\right) ^{\perp }\right) =H.  \label{decom energy space}
\end{equation}
\end{enumerate}

\begin{remark}
$\left. {}\right. $

\begin{enumerate}
\item Solutions of \ $\left( \text{\ref{Sys lin ext}}\right) $ and $\left( 
\text{\ref{sys:nonlinear}}\right) $ verify the finite speed propagation
property.

\item The nonlinearity being localized, it is easy to see that 
\begin{equation}
U\left( t\right) =U_{D}\left( t\right) \text{ on }D_{+}^{R}\text{ for every }%
t\geq 0.  \label{solution operating}
\end{equation}

\item Following \cite{la-ph}, we denote by $P_{+}\left( \text{resp.}%
P_{-}\right) $ the orthogonal projection of $H$ onto the orthogonal
complement of $D_{+}^{R}\left( \text{resp.}D_{-}^{R}\right) $. Thanks to $%
\left( \text{\ref{decom energy space}}\right) $, we easily deduce 
\begin{equation}
P_{+}\varphi \in \left( D_{+}^{R}\right) ^{\perp }\cap \left(
D_{-}^{R}\right) ^{\perp }\text{ if }\varphi \in \left( D_{-}^{R}\right)
^{\perp }.  \label{p+ property}
\end{equation}

\item The semi-group $U\left( t\right) $ operates on $D_{+}^{R}$ for $t\geq
0 $. Using the fact that the Cauchy problem admits a unique solution, we
obtain: 
\begin{eqnarray}
U\left( t\right) \varphi &=&U\left( t\right) P_{+}\varphi +U\left( t\right)
\left( I-P_{+}\right) \varphi  \notag \\
&=&U\left( t\right) P_{+}\varphi +U_{D}\left( t\right) \left( I-P_{+}\right)
\varphi ,  \label{d+ decomposition}
\end{eqnarray}%
for every $\varphi $ in $H$ and for every $t\in \mathbb{R}_{+}$.
\end{enumerate}
\end{remark}

We denote by $K=\left( D_{+}^{R}\right) ^{\perp }\cap \left(
D_{-}^{R}\right) ^{\perp }$, and we define the nonlinear Lax-Phillips
operator on $K$ by$\ $%
\begin{equation}
Z\left( t\right) =P_{+}U\left( t\right) P_{-}\text{ \ for }t\geq 0.
\label{lax phillips semigroup def}
\end{equation}%
In order to prove that $Z\left( t\right) $ operates on $K$, we need the
following lemma.

\begin{lemma}
$\left. {}\right. $\newline
\label{lemma operance}Let $\left( \varphi ,\psi \right) \in H\times H$\emph{%
\ }and\emph{\ }$t\geq 0$,\emph{\ }we have%
\begin{equation}
\left\langle U\left( t\right) \varphi ,\psi \right\rangle _{H}-\left\langle
\varphi ,U_{D}\left( -t\right) \psi \right\rangle _{H}=-\int_{0}^{t}\rho
\left( s\right) \left\langle ag(\partial _{t}u(s)),\partial _{t}v\left(
s-t\right) \right\rangle _{L^{2}\left( \Omega \right) }ds,
\label{D- operating neumann intermediaire}
\end{equation}%
where we denoted by $U(t)\varphi =(u(t)$,$\partial _{t}u(t))$ and $%
U_{D}(t)\psi =(v(t),\partial _{t}v(t))$.
\end{lemma}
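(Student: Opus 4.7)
The identity is a duality / integration-by-parts relation between the nonlinear damped evolution and the backward linear Dirichlet evolution. The plan is to introduce, for $s\in[0,t]$, the scalar function
\[
I(s) := \bigl\langle U(s)\varphi,\; U_{D}(s-t)\psi\bigr\rangle_{H},
\]
and observe that $I(t)=\langle U(t)\varphi,\psi\rangle_{H}$ and $I(0)=\langle\varphi,U_{D}(-t)\psi\rangle_{H}$, so that the left-hand side of \eqref{D- operating neumann intermediaire} is exactly $I(t)-I(0)$. It therefore suffices to differentiate $I$ in $s$ and integrate.

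The second step is to compute $I'(s)$ formally. Writing $U(s)\varphi=(u(s),\partial_{s}u(s))$ and $U_{D}(s-t)\psi=(\tilde v(s),\partial_{s}\tilde v(s))$, where $\tilde v(s,x)=v(s-t,x)$ solves the free Dirichlet wave equation $\partial_{s}^{2}\tilde v-\Delta\tilde v=0$ with $\tilde v|_{\Gamma}=0$, one differentiates under the integral sign. The cross terms coming from $\int\nabla u\cdot\nabla\tilde v\,dx$ are transformed via Green's formula, the boundary contributions vanishing because both $u$ and $\tilde v$ satisfy the homogeneous Dirichlet condition on $\Gamma$ and decay at infinity. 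Substituting the equations $\partial_{s}^{2}u=\Delta u-a\rho(s)g(\partial_{s}u)$ and $\partial_{s}^{2}\tilde v=\Delta\tilde v$ into the derivative of the kinetic pairing $\int\partial_{s}u\,\partial_{s}\tilde v\,dx$, the Laplacian cross terms cancel in pairs and the only surviving contribution is the damping term. One obtains (up to the normalization of $\langle\cdot,\cdot\rangle_{H}$)
\[
I'(s) \;=\; -\rho(s)\bigl\langle a\,g(\partial_{s}u(s)),\,\partial_{s}\tilde v(s)\bigr\rangle_{L^{2}(\Omega)}.
\]
Integrating from $0$ to $t$ and noting that $\partial_{s}\tilde v(s,\cdot)=\partial_{t}v(s-t,\cdot)$ yields precisely \eqref{D- operating neumann intermediaire}.

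The main obstacle is to make this computation rigorous, since $u$ is only a weak finite-energy solution: $\partial_{s}u\in L^{2}$ and $g(\partial_{s}u)$ is a priori just in some $L^{p}$, so the derivatives and integrations by parts above cannot be done directly. The standard remedy is a density argument. Approximate $\varphi$ and $\psi$ by sequences $\varphi^{n},\psi^{n}\in (C_{0}^{\infty}(\Omega))^{2}$ with $\varphi^{n}\to\varphi$ and $\psi^{n}\to\psi$ in $H$, and regularize the nonlinearity if necessary by a sequence of smooth, monotone $g_{n}\to g$. For classical solutions $u^{n},v^{n}$ the calculation above is rigorous and yields the identity \eqref{D- operating neumann intermediaire} at the approximate level. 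One then passes to the limit: the left-hand side is continuous in the data because $U(t)$ is continuous on $H$ (and $U_{D}(-t)$ is an isometry), while the right-hand side passes to the limit using the monotonicity of $g$, the energy identity \eqref{energy inequality} controlling $\int_{0}^{t}\int_{\Omega}a\,g(\partial_{s}u)\partial_{s}u\,dx\,ds$, and a Minty-type argument to identify the weak limit of $g(\partial_{s}u^{n})$ with $g(\partial_{s}u)$. The factor $\rho(s)$ plays no role in this limiting process since it is bounded on $[0,t]$ thanks to the growth bound $|\rho'|\leq C_{0}\rho$.
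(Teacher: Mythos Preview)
Your proof is correct and follows essentially the same approach as the paper: differentiate the pairing between the nonlinear flow and the linear Dirichlet flow, use Green's formula so that the Laplacian cross terms cancel and only the damping survives, and justify the formal computation by approximating with smooth data. The only cosmetic difference is that the paper differentiates $s\mapsto\langle U(s)\varphi,U_{D}(s)\psi\rangle_{H}$ and then substitutes $\psi\mapsto U_{D}(-t)\psi$ at the end (whereas you build the shift $s-t$ into $I(s)$ from the start), and the paper is terser on the limiting step---your Minty argument is in fact more than needed, since under the linear-growth assumption on $g$ and the compact support of $a$ the Nemytskii map $a\,g(\cdot)$ is already continuous on $L^{2}$.
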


\begin{proof}
Noting that for each $\left( u_{0},u_{1}\right) \in H$ the solution $u$ of $%
\left( \ref{sys:nonlinear}\right) $ are given as a limit of smooth solution $%
u_{n}$ with initial data $\left( u_{n,0},u_{n,1}\right) $ smooth such that $%
\left( u_{n,0},u_{n,1}\right) \longrightarrow \left( u_{0},u_{1}\right) $ in 
$H$. Note that $\left\Vert u_{n}\left( t,.\right) -u\left( t,.\right)
\right\Vert _{H_{D}}+\left\Vert \partial _{t}u_{n}\left( t,.\right)
-\partial _{t}u\left( t,.\right) \right\Vert _{L^{2}}\longrightarrow 0$,
uniformly on $%
%TCIMACRO{\U{211d} }%
%BeginExpansion
\mathbb{R}
%EndExpansion
_{+}.$ So we may assume that $u$ is a smooth function. Let $\varphi \in H$
and $\psi \in H$. Thanks to Green formula%
\begin{eqnarray*}
\frac{d}{dt}\left\langle U\left( t\right) \varphi ,U_{D}(t)\psi
\right\rangle _{H} &=&\frac{d}{dt}\left( \left\langle \nabla u,\nabla
v\right\rangle _{L^{2}}+\left\langle \partial _{t}u,\partial
_{t}v\right\rangle _{L^{2}}\right) \\
&=&\left\langle \partial _{t}^{2}u-\Delta u,\partial _{t}v\right\rangle
_{L^{2}}+\left\langle \partial _{t}u,\partial _{t}^{2}v-\Delta
v\right\rangle _{L^{2}} \\
&=&\left\langle \partial _{t}^{2}u-\Delta u,\partial _{t}v\right\rangle
_{L^{2}}
\end{eqnarray*}%
We easily deduce%
\begin{equation*}
\left\langle U(t)\varphi ,U_{D}(t)\psi \right\rangle _{H}-\left\langle
\varphi ,\psi \right\rangle _{H}=-\int_{0}^{t}\rho \left( s\right)
\left\langle ag(\partial _{t}u(s)),\partial _{t}v\left( s\right)
\right\rangle _{L^{2}}ds.
\end{equation*}%
Consequently, we obtain%
\begin{equation*}
\left\langle U(t)\varphi ,\psi \right\rangle _{H}-\left\langle \varphi
,U_{D}(-t)\psi \right\rangle _{H}=-\int_{0}^{t}\rho \left( s\right)
\left\langle ag(\partial _{t}u(s)),\partial _{t}v\left( s-t\right)
\right\rangle _{L^{2}}ds.
\end{equation*}
\end{proof}

Using the result above we prove that the Lax-Phillips semi-group operates on 
$K$.

\begin{proposition}
\label{propositionz(t)}$\left. {}\right. $\newline
The semi-group $\left( Z\left( t\right) \right) _{t\geq 0}$ operates on $K$.
\end{proposition}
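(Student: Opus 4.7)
The plan is to show that for any $\varphi \in K$ and $t\geq 0$ we have $Z(t)\varphi \in K$. Since $\varphi \in K \subset (D_-^R)^\perp$ and $P_-$ is the orthogonal projection onto $(D_-^R)^\perp$, one immediately has $P_-\varphi = \varphi$, so that $Z(t)\varphi = P_+ U(t)\varphi$. Thanks to property $\left(\text{\ref{p+ property}}\right)$, to conclude $P_+U(t)\varphi \in K$ it is enough to prove that
\begin{equation*}
U(t)\varphi \in (D_-^R)^\perp,
\end{equation*}
that is, $\langle U(t)\varphi, \psi\rangle_H = 0$ for every $\psi \in D_-^R$.

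To verify this, I would apply Lemma \ref{lemma operance}, which yields
\begin{equation*}
\langle U(t)\varphi, \psi \rangle_H = \langle \varphi, U_D(-t)\psi \rangle_H - \int_0^t \rho(s) \langle a\, g(\partial_t u(s)), \partial_t v(s-t)\rangle_{L^2(\Omega)} ds,
\end{equation*}
where $U_D(s-t)\psi = (v(s-t), \partial_t v(s-t))$. The strategy is to show that both terms on the right vanish, exploiting the finite speed of propagation encoded in the definition of $D_-^R$.

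For the first term, I would argue that $D_-^R$ is invariant under $U_D(-t)$ for $t\geq 0$: if $\psi\in D_-^R$, then for every $s\leq 0$, $U_D(s-t)\psi = 0$ on $|x| < -(s-t) + R$, which in particular gives $U_D(s)[U_D(-t)\psi] = 0$ on $|x| < -s + R$, so $U_D(-t)\psi \in D_-^R$. Since $\varphi\in K \subset (D_-^R)^\perp$, the first pairing vanishes.

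The integral term is where the interaction between the damping and the Lax--Phillips structure appears, and I expect this to be the key step. For $0\leq s \leq t$ we have $s-t\leq 0$, so the definition of $D_-^R$ forces $U_D(s-t)\psi = 0$ on the ball $|x| < R + (t-s)$, which contains $B_R$. In particular $\partial_t v(s-t, x) = 0$ for $x\in B_R$; combined with $\operatorname{supp} a \subset B_R$, the integrand vanishes pointwise. Thus the integral is zero, giving $\langle U(t)\varphi,\psi\rangle_H = 0$, and the proof is complete by the argument of the first paragraph. The main obstacle is simply to check carefully the invariance properties of $D_-^R$ and to make sure the support condition on $a$ really does annihilate the nonlinear damping contribution over the whole interval $[0,t]$.
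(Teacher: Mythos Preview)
Your proposal is correct and follows essentially the same approach as the paper's proof: reduce via property~(\ref{p+ property}) to showing $U(t)\varphi\in (D_-^R)^\perp$, apply Lemma~\ref{lemma operance}, then kill the first term by invariance of $D_-^R$ under $U_D(-t)$ and the integral term by observing that $\partial_t v(s-t)$ vanishes on $B_R\supset \mathrm{supp}\,a$. Your version is in fact slightly more explicit than the paper's, since you spell out why $P_-\varphi=\varphi$ and give the invariance argument for $D_-^R$ in full.
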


\begin{proof}
Let $\varphi \in K$, and $t\geq 0$. According to $\left( \text{\ref{p+
property}}\right) $, it suffices to verify that $U\left( t\right) \varphi
\in \left( D_{-}^{R}\right) ^{\perp }$. Let $\left( \varphi ,\psi \right)
\in \left( D_{-}^{R}\right) ^{\perp }\times D_{-}^{R}$, then $\left( \text{%
\ref{D- operating neumann intermediaire}}\right) $ yields,%
\begin{equation*}
\left\langle U\left( t\right) \varphi ,\psi \right\rangle _{H}=\left\langle
\varphi ,U_{D}\left( -t\right) \psi \right\rangle _{H}-\int_{0}^{t}\rho
\left( s\right) \left\langle ag(\partial _{t}u),\partial _{t}v\left(
s-t\right) \right\rangle _{L^{2}\left( \Omega \right) }ds.
\end{equation*}%
Knowing that $U_{D}\left( s-t\right) $ operates on $D_{-}^{R}$ for $s\leq t$%
, and thus $U_{D}\left( s-t\right) \varphi =0$ for $\left\vert x\right\vert
\leq R+t-s$, we deduce that%
\begin{equation*}
\partial _{t}v\left( s-t\right) _{\mid B_{R}}=0\text{ on }\left[ 0,t\right] .
\end{equation*}%
This gives,%
\begin{equation*}
\left\langle U\left( t\right) \varphi ,\psi \right\rangle _{H}=\left\langle
\varphi ,U_{D}\left( -t\right) \psi \right\rangle _{H}=0.
\end{equation*}
\end{proof}

\section{Proof of the main Theorem\label{rate of decay}}

To prove the main Theorem we need some preliminary results.

\subsection{Preliminary results}

First we give the following result due to Aloui and Khenissi \cite{alkh}.

\begin{proposition}[\protect\cite{alkh}]
\label{proposition ALKH}We assume that (EGC) holds. Then, there exist $%
c_{0}>0$ and $T>0$ such that 
\begin{equation}
\left\Vert Z_{L}\left( t\right) \right\Vert _{L\left( K\right) }=\left\Vert
P_{+}U_{L}\left( t\right) P_{-}\right\Vert _{L\left( K\right) }\leq c_{0}<1,
\label{Z_L contraction}
\end{equation}%
for every $t\geq T$.
\end{proposition}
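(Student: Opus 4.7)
The plan is to argue by contradiction using microlocal defect measures, following the strategy of Aloui--Khenissi. Suppose the conclusion fails. Then one can extract sequences $t_n \to +\infty$ and $\varphi_n \in K$ with $\|\varphi_n\|_H = 1$ such that $\|Z_L(t_n)\varphi_n\|_H \to 1$. Since $\varphi_n \in (D_-^R)^\perp$, one has $P_-\varphi_n = \varphi_n$, so $Z_L(t_n)\varphi_n = P_+ U_L(t_n)\varphi_n$. Writing $U_L(s)\varphi_n = (u_n(s),\partial_t u_n(s))$, the linear energy identity and the orthogonality of $P_+$ give
\[
1 = \|P_+ U_L(t_n)\varphi_n\|_H^2 + \|(I-P_+)U_L(t_n)\varphi_n\|_H^2 + \int_0^{t_n}\int_\Omega a(x)|\partial_t u_n|^2\,dx\,ds,
\]
so the assumption $\|Z_L(t_n)\varphi_n\|_H\to 1$ forces both $(I-P_+)U_L(t_n)\varphi_n\to 0$ in $H$ and $\sqrt{a}\,\partial_t u_n \to 0$ in $L^2((0,t_n)\times\Omega)$.

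Next I would localize onto a fixed time window. For a large constant $T \geq T_R + 9R$ (the extra $9R$ is the buffer needed for the round trip between the obstacle and $\partial B_R$, which is exactly what allows one to trade the projections $P_\pm$ for literal support conditions via finite speed of propagation), set $v_n(s,x) = u_n(t_n - T + s, x)$ on $[0,T]\times\Omega$. The sequence $(v_n,\partial_s v_n)$ is bounded in $H$, solves the free wave equation modulo an $L^2$--small damping term, and $\sqrt{a}\,\partial_s v_n \to 0$ in $L^2$. Along a subsequence, extract a microlocal defect measure $\mu$ on $T^*([0,T]\times\overline{\Omega})$ in the sense of G\'erard--Lebeau.

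By the Melrose--Sj\"ostrand propagation theorem, $\mu$ is supported on the characteristic set of $\partial_t^2 - \Delta$ and is invariant under the generalized bicharacteristic flow. The damping limit forces $\mu \equiv 0$ above $[0,T]\times\omega$, where $\omega = \{a>0\}$. On the other hand, the hypothesis $\varphi_n \in (D_-^R)^\perp$ rules out incoming mass entering $B_R$ from outside during the evolution, while $(I-P_+)U_L(t_n)\varphi_n\to 0$ rules out mass escaping in outgoing modes at time $t_n$; combined with finite speed of propagation and the choice $T \geq T_R + 9R$, these two facts confine the support of $\mu$ on $\{s=T\}$ to lie above $B_R$. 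Now invoke (EGC): every generalized geodesic issued from a point of $B_R$ at time $s=0$ either leaves $B_R$ before time $T_R < T$ or meets $\omega$ in that interval. By propagation, $\mu$ must vanish on $\{s=T\}$ above $B_R$ as well, contradicting $\|P_+ U_L(t_n)\varphi_n\|_H\to 1$.

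The main obstacle, and the place where the geometric constant $T \geq T_R + 9R$ really matters, is step three: one must turn the abstract orthogonality relations defining $K$ into honest microlocal support statements for $\mu$ on the boundary slices $\{s=0\}$ and $\{s=T\}$. This is where one relies on the Lax--Phillips translation representations, which identify $D_\pm^R$ with half-spaces of outgoing/incoming profiles, so that membership in $(D_\pm^R)^\perp$ becomes a genuine support condition after one round trip to $\partial B_R$. Once this microlocalization is set up, the propagation argument under (EGC) closes the contradiction cleanly.
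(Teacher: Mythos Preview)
The paper does not prove this proposition at all: it is quoted verbatim from Aloui--Khenissi \cite{alkh} and used as a black box. So there is no ``paper's own proof'' to compare against.

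Your sketch is a reasonable outline of the original Aloui--Khenissi argument, and the overall architecture (contradiction, energy identity forcing $\sqrt{a}\,\partial_t u_n\to 0$ in $L^2$, extraction of a microlocal defect measure, propagation along generalized bicharacteristics, and annihilation via EGC) is correct. Two places would need more care in a full write-up. First, the time-shift $v_n(s)=u_n(t_n-T+s)$ is fine, but you then want the measure to be attached to solutions of the \emph{undamped} equation $\partial_t^2-\Delta$, and you only know that the damping term $a\,\partial_t u_n$ is small in $L^2$; one should say explicitly that this forces the measure to live on the characteristic set of the free operator and be flow-invariant (this is standard but not automatic). Second, and more delicately, the step where you convert $\varphi_n\in(D_-^R)^\perp$ and $(I-P_+)U_L(t_n)\varphi_n\to 0$ into microlocal support restrictions on $\mu$ at the two time slices is exactly the subtle part of \cite{alkh}; invoking ``Lax--Phillips translation representations'' is the right keyword, but the actual argument uses that data in $K$ have local energy concentrated in $B_{2R}$ (via the explicit description of $(D_\pm^R)^\perp$) together with finite propagation speed, and this is where the extra margin in $T$ is consumed. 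As a sketch your proposal is on target; as a proof it is still at the level of a roadmap.
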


In the proposition below we prove a mixed observability.

\begin{proposition}
There exist $T>0$ and $C>0$ such that for every $\varphi $ in $K$ , we have%
\begin{eqnarray}
\left\Vert Z\left( t\right) \varphi \right\Vert _{H}^{2} &\leq &C\left(
\left\Vert Z\left( t\right) \varphi \right\Vert _{H}^{2}-\left\Vert Z\left(
t+T\right) \varphi \right\Vert _{H}^{2}\right)  \notag \\
&&+C\int_{t}^{t+T}\int_{\Omega }\rho \left( s\right) g\left( \partial
_{t}u\right) \partial _{t}v\left( s-t\right) +\partial _{t}u\partial
_{t}v\left( s-t\right) d\mathfrak{m}_{a},  \label{mixed observability}
\end{eqnarray}%
for all $t\geq 0$, where $u$ and $v$ denote respectively the solution of $%
\left( \text{\ref{sys:nonlinear}}\right) $ and $\left( \text{\ref{sys linear}%
}\right) $ with initial data $\varphi =\left( \varphi _{0},\varphi
_{1}\right) $ and $\left( \left( v_{0},v_{1}\right) =Z\left( t\right)
\varphi \right) $\ in $K$.
\end{proposition}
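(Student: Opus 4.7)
The approach is to combine the linear Lax--Phillips contraction from Proposition \ref{proposition ALKH} with a duality identity analogous to Lemma \ref{lemma operance}, but where the free wave group $U_D$ is replaced by the linear damped group $U_L$. First I pick $T \geq T_R + 9R$ large enough that Proposition \ref{proposition ALKH} yields $\|Z_L(T)\|_{L(K)} \leq c_0$ with $c_0 < 1$ (possibly taken arbitrarily small by iterating the contraction). Since $Z(t)\varphi \in K$ for every $t \geq 0$ by Proposition \ref{propositionz(t)}, this gives $\|Z_L(T) Z(t)\varphi\|_H \leq c_0 \|Z(t)\varphi\|_H$.

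The second step derives the duality identity. Let $v$ denote the solution of \eqref{sys linear} with initial data $Z(t)\varphi$ and compute $\tfrac{d}{ds}\langle U(t+s)\varphi,\, U_L(s)Z(t)\varphi\rangle_H$ by Green's formula, using the PDEs for $u$ and $v$. The Laplacian terms cancel, and only the two damping contributions survive: $\rho(t+s)g(\partial_t u(t+s))$ from the nonlinear equation and $\partial_s v(s)$ from the linear damped one. Integrating over $s\in [0,T]$ and using that $\langle U(t)\varphi, Z(t)\varphi\rangle_H = \|Z(t)\varphi\|_H^2$ (because $Z(t)\varphi = P_+ U(t)\varphi$ and $P_+$ is an orthogonal projection), one obtains, up to a normalization constant,
\begin{equation*}
\|Z(t)\varphi\|_H^2 = \langle U(t+T)\varphi,\, U_L(T)Z(t)\varphi \rangle_H + \int_t^{t+T}\!\!\int_\Omega \bigl[\rho(s) g(\partial_t u) + \partial_t u\bigr]\partial_t v(s-t)\, d\mathfrak{m}_a.
\end{equation*}

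The third step uses the orthogonal splitting $H = (D_+^R)^\perp \oplus D_+^R$. Writing each factor of the inner product as $P_+ + (I-P_+)$, the two cross terms vanish (elements of $(D_+^R)^\perp$ are orthogonal to elements of $D_+^R$), giving
\begin{equation*}
\langle U(t+T)\varphi,\, U_L(T)Z(t)\varphi\rangle_H = \langle Z(t+T)\varphi,\, Z_L(T)Z(t)\varphi\rangle_H + R,
\end{equation*}
where $R = \langle (I-P_+)U(t+T)\varphi,\, (I-P_+)U_L(T)Z(t)\varphi\rangle_H$. Cauchy--Schwarz together with the Aloui--Khenissi bound gives $|\langle Z(t+T)\varphi, Z_L(T)Z(t)\varphi\rangle_H| \leq c_0 \|Z(t+T)\varphi\|_H \|Z(t)\varphi\|_H$. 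Treating the resulting inequality as quadratic in $\|Z(t)\varphi\|_H$ (with $\|Z(t+T)\varphi\|_H$ as parameter) and solving it, then squaring, produces a bound of the form $\|Z(t)\varphi\|_H^2 \leq 2c_0^2 \|Z(t+T)\varphi\|_H^2 + 2|R| + 2\,(\text{cross-integrals})$, which, after adding and subtracting $2c_0^2\|Z(t)\varphi\|_H^2$ and using $c_0 < 1$, rearranges into the target shape of \eqref{mixed observability}.

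The main obstacle is controlling the residual $R$, whose factors both lie in $D_+^R$. By Cauchy--Schwarz and the Pythagorean identities $\|(I-P_+)U(t+T)\varphi\|_H^2 = \|U(t+T)\varphi\|_H^2 - \|Z(t+T)\varphi\|_H^2$ and $\|(I-P_+)U_L(T)Z(t)\varphi\|_H^2 = \|U_L(T)Z(t)\varphi\|_H^2 - \|Z_L(T)Z(t)\varphi\|_H^2$, together with the nonlinear energy identity \eqref{energy inequality} applied on $[t, t+T]$ and the linear energy identity for $v$, each factor is rewritten as a sum of the energy drop $\|Z(t)\varphi\|_H^2 - \|Z(t+T)\varphi\|_H^2$ and the same cross-integrals already present on the right-hand side. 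The freedom to take $c_0$ arbitrarily small by enlarging $T$ provides the slack needed in Young's inequality to absorb the residual contributions and close the estimate with a single finite constant $C$.
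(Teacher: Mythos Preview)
Your duality identity in step~2 is correct (up to a factor $\tfrac12$ from the normalization of the $H$-norm), and so is the orthogonal splitting in step~3. The gap is in step~4, where you claim that the first factor of $R$, namely $\|(I-P_+)U(t+T)\varphi\|_H$, can be rewritten as ``the energy drop $\|Z(t)\varphi\|_H^2 - \|Z(t+T)\varphi\|_H^2$ plus the same cross-integrals.'' The Pythagorean identity gives
\[
\|(I-P_+)U(t+T)\varphi\|_H^2 \;=\; \|U(t+T)\varphi\|_H^2 - \|Z(t+T)\varphi\|_H^2,
\]
and the nonlinear energy identity on $[t,t+T]$ turns $\|U(t+T)\varphi\|_H^2$ into $\|U(t)\varphi\|_H^2 - \int_t^{t+T}\!\int_\Omega \rho\,a\,g(\partial_t u)\partial_t u\,dx\,ds$. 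But $\|U(t)\varphi\|_H^2 = E_u(t)$ is the \emph{full} energy, not $\|Z(t)\varphi\|_H^2$; the difference $\|(I-P_+)U(t)\varphi\|_H^2$ accumulates everything radiated to $D_+^R$ over $[0,t]$ and is generically of order $E_u(0)$. Since the second factor of $R$ is only bounded by $\|Z(t)\varphi\|_H$, Cauchy--Schwarz yields $|R|\lesssim \sqrt{E_u(0)}\,\|Z(t)\varphi\|_H$, a term that cannot be absorbed however small you take $c_0$. (The dissipation integral $\int \rho\,a\,g(\partial_t u)\partial_t u$ produced here is also not the ``same cross-integral'' appearing in \eqref{mixed observability}, which involves $\partial_t v$.)

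The paper avoids this by never looking at $U(t+T)\varphi$ directly. It introduces the auxiliary nonlinear flow $\phi$ solving \eqref{system phi} with initial data $Z(t)\varphi$, observes that $\phi$ coincides with $u(\cdot+t)$ on $B_R$ and that $P_+U_t(T)Z(t)\varphi = Z(t+T)\varphi$, and then compares $\phi$ with $v$ through the energy identity for $z=\phi-v$. Because both $\phi$ and $v$ start from $Z(t)\varphi$, every energy quantity stays at the $\|Z(t)\varphi\|_H$ scale, and the triangle inequality $\|Z(t+T)\varphi\|_H \le \|U_t(T)Z(t)\varphi - U_L(T)Z(t)\varphi\|_H + \|Z_L(T)Z(t)\varphi\|_H$ combined with Proposition~\ref{proposition ALKH} closes immediately. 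If you want to salvage your route, you would need an additional orthogonality between the radiated piece $U_D(T)(I-P_+)U(t)\varphi$ (supported in $|x|\ge R+T$) and $(I-P_+)U_L(T)Z(t)\varphi$; this amounts to reintroducing the decomposition the paper uses, so it is simpler to work with $\phi$ from the start.
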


\begin{proof}
Let $\varphi \in K$. According to Proposition \ref{propositionz(t)}, $%
Z\left( t\right) \varphi \in K$ for every $t\geq 0$. Setting $\left( \phi
\left( s\right) ,\partial _{s}\phi \left( s\right) \right) =U_{t}\left(
s\right) Z\left( t\right) \varphi $, where $\phi $ is the solution of 
\begin{equation}
\begin{array}{l}
\left\{ 
\begin{array}{ll}
\partial _{s}^{2}\phi -\Delta \phi +\rho \left( s+t\right) a\left( x\right)
g\left( \partial _{s}\phi \right) =0, & 
%TCIMACRO{\U{211d} }%
%BeginExpansion
\mathbb{R}
%EndExpansion
_{+}\times \Omega , \\ 
\phi =0, & 
%TCIMACRO{\U{211d} }%
%BeginExpansion
\mathbb{R}
%EndExpansion
_{+}\times \Gamma , \\ 
\left( \phi \left( 0\right) ,\partial _{s}\phi \left( 0\right) \right)
=Z\left( t\right) \varphi , & 
\end{array}%
\right. 
\end{array}
\label{system phi}
\end{equation}%
we have 
\begin{equation*}
U_{t}\left( s\right) Z\left( t\right) \varphi -U_{D}\left( s\right) \left(
P_{+}-I\right) U\left( t\right) \varphi =U\left( s+t\right) \varphi ,
\end{equation*}%
which implies in particular that 
\begin{equation*}
U_{t}\left( s\right) Z\left( t\right) \varphi =U\left( s+t\right) \varphi ,%
\text{ on }B_{R},
\end{equation*}%
and 
\begin{equation}
P_{+}U_{t}\left( s\right) Z\left( t\right) \varphi =P_{+}U\left( s+t\right)
\varphi =Z\left( t+T\right) \varphi .  \label{zt estimate}
\end{equation}%
Then we obtain%
\begin{equation*}
\left\Vert Z\left( t+T\right) \varphi \right\Vert _{H}\leq \left\Vert
U_{t}\left( s\right) Z\left( t\right) \varphi -U_{L}(T)Z\left( t\right)
\varphi \right\Vert _{H}+\left\Vert Z_{L}\left( T\right) Z\left( t\right)
\varphi \right\Vert _{H}.
\end{equation*}%
According to Proposition \ref{proposition ALKH}, there exist $0<c_{0}<1$ and 
$T>0$ such that,%
\begin{equation}
\left\Vert Z\left( t+T\right) \varphi \right\Vert _{H}\leq \left\Vert
U_{t}\left( s\right) Z\left( t\right) \varphi -U_{L}(T)Z\left( t\right)
\varphi \right\Vert _{H}+c_{0}\left\Vert Z\left( t\right) \varphi
\right\Vert _{H}.  \label{contrac inequ}
\end{equation}%
On the other hand, let $v$ solution of $\left( \text{\ref{sys linear}}%
\right) $ with the same initial data $Z\left( t\right) \varphi $ in $K$.
Then we define $z=\phi -v$, which satisfies the following system 
\begin{equation*}
\left\{ 
\begin{array}{ll}
\partial _{t}^{2}z-\Delta z+a\left( x\right) \rho \left( s+t\right) g\left(
\partial _{t}\phi \right) -a(x)\partial _{t}v=0, & 
%TCIMACRO{\U{211d} }%
%BeginExpansion
\mathbb{R}
%EndExpansion
_{+}\times \Omega , \\ 
z=0, & 
%TCIMACRO{\U{211d} }%
%BeginExpansion
\mathbb{R}
%EndExpansion
_{+}\times \partial \Omega , \\ 
\left( z\left( 0\right) ,\partial _{t}z\left( 0\right) \right) =0. & 
\end{array}%
\right. 
\end{equation*}%
Since $Z\left( t\right) \varphi \in K$, then $a\left( x\right) \left( \rho
\left( s+t\right) g\left( \partial _{t}\phi \right) -\partial _{t}v\right)
\in L^{2}\left( \left( 0,T\right) \times \Omega \right) $. This observation
permits us to apply energy identity, whence 
\begin{equation}
E_{z}(T)=\int_{\Omega _{T}}a\left( x\right) \left( \partial _{t}v-\rho
\left( s+t\right) g\left( \partial _{t}\phi \right) \right) \partial
_{t}z\;dxdt  \label{z}
\end{equation}%
The monotonicity of $g$ and $g(0)=0$ gives $g(s)s\geq 0$ for all $s\in 
%TCIMACRO{\U{211d} }%
%BeginExpansion
\mathbb{R}
%EndExpansion
.$ Therefore using the identity $\left( \ref{z}\right) $, we get 
\begin{equation*}
\left\Vert U_{t}\left( s\right) Z\left( t\right) \varphi -U_{L}(T)Z\left(
t\right) \varphi \right\Vert _{H}^{2}\leq \int_{0}^{T}\int_{\Omega }\rho
\left( s+t\right) g\left( \partial _{t}\phi \right) \partial _{t}v+\partial
_{t}v\partial _{t}\phi d\mathfrak{m}_{a},
\end{equation*}%
Using $\left( \text{\ref{contrac inequ}}\right) $, we obtain%
\begin{eqnarray*}
\left\Vert Z\left( t+T\right) \varphi \right\Vert _{H} &\leq &\left(
\int_{0}^{T}\int_{\Omega }\rho \left( s+t\right) g\left( \partial _{t}\phi
\right) \partial _{t}v+\partial _{t}v\partial _{t}ud\mathfrak{m}_{a}\right)
^{1/2}+c_{0}\left\Vert Z\left( t\right) \varphi \right\Vert _{H} \\
&=&\left( \int_{0}^{T}\int_{\Omega }\rho \left( s+t\right) g\left( \partial
_{t}\phi \right) \partial _{t}v+\partial _{t}v\partial _{t}\phi d\mathfrak{m}%
_{a}\right) ^{1/2} \\
&&+c_{0}\left( \left\Vert Z\left( t\right) \varphi \right\Vert
_{H}-\left\Vert Z\left( t+T\right) \varphi \right\Vert _{H}\right)
+c_{0}\left\Vert Z\left( t+T\right) \varphi \right\Vert _{H}.
\end{eqnarray*}%
After some computation, we deduce that%
\begin{eqnarray*}
\left\Vert Z\left( t+T\right) \varphi \right\Vert _{H} &\leq &\frac{1}{%
1-c_{0}}\left( \int_{0}^{T}\int_{\Omega }\rho \left( s+t\right) g\left(
\partial _{t}\phi \right) \partial _{t}v+\partial _{t}v\partial _{t}\phi d%
\mathfrak{m}_{a}\right) ^{1/2} \\
&&+\frac{c_{0}}{1-c_{0}}\left( \left\Vert Z\left( t\right) \varphi
\right\Vert _{H}-\left\Vert Z\left( t+T\right) \varphi \right\Vert
_{H}\right) ,
\end{eqnarray*}%
this gives%
\begin{eqnarray*}
\left\Vert Z\left( t\right) \varphi \right\Vert _{H} &\leq &\frac{1}{1-c_{0}}%
\left( \int_{0}^{T}\int_{\Omega }\rho \left( s+t\right) g\left( \partial
_{t}\phi \right) \partial _{t}v+\partial _{t}v\partial _{t}\phi d\mathfrak{m}%
_{a}\right) ^{1/2} \\
&&+\left( \frac{1}{1-c_{0}}\right) \left( \left\Vert Z\left( t\right)
\varphi \right\Vert _{H}-\left\Vert Z\left( t+T\right) \varphi \right\Vert
_{H}\right) .
\end{eqnarray*}%
Using the fact that for $a>b>0,$ $a-b\leq \left( a^{2}-b^{2}\right) ^{1/2},$
we obtain%
\begin{equation*}
\begin{array}{cc}
\left\Vert Z\left( t\right) \varphi \right\Vert _{H} & \leq \frac{1}{1-c_{0}}%
\left( \int_{0}^{T}\int_{\Omega }\rho \left( s+t\right) g\left( \partial
_{t}\phi \right) \partial _{t}v+\partial _{t}v\partial _{t}\phi d\mathfrak{m}%
_{a}\right) ^{1/2} \\ 
& +\left( \frac{1}{1-c_{0}}\right) \left( \left\Vert Z\left( t\right)
\varphi \right\Vert _{H}^{2}-\left\Vert Z\left( t+T\right) \varphi
\right\Vert _{H}^{2}\right) ^{1/2}.%
\end{array}%
\end{equation*}%
Therefore we have%
\begin{equation*}
\begin{array}{cc}
\left\Vert Z\left( t\right) \varphi \right\Vert _{H}^{2}\leq  & 
C\int_{0}^{T}\int_{\Omega }\rho \left( s+t\right) g\left( \partial _{t}\phi
\right) \partial _{t}v+\partial _{t}v\partial _{t}\phi d\mathfrak{m}_{a} \\ 
& +C\left( \left\Vert Z\left( t\right) \varphi \right\Vert
_{H}^{2}-\left\Vert Z\left( t+T\right) \varphi \right\Vert _{H}^{2}\right) ,%
\end{array}%
\end{equation*}%
with $C=\frac{2}{\left( 1-c_{0}\right) ^{2}}.$

Since 
\begin{equation*}
U_{t}\left( s\right) Z\left( t\right) \varphi =U\left( s+t\right) \varphi ,%
\text{ on }B_{R},
\end{equation*}%
we obtain%
\begin{equation*}
\begin{array}{cc}
\left\Vert Z\left( t\right) \varphi \right\Vert _{H}^{2}\leq & 
C\int_{0}^{T}\int_{\Omega }\rho \left( s+t\right) g\left( \partial
_{t}u\left( s+t\right) \right) \partial _{t}v+\partial _{t}v\partial
_{t}u\left( s+t\right) d\mathfrak{m}_{a} \\ 
& +C\left( \left\Vert Z\left( t\right) \varphi \right\Vert
_{H}^{2}-\left\Vert Z\left( t+T\right) \varphi \right\Vert _{H}^{2}\right) .%
\end{array}%
\end{equation*}
\end{proof}

In the next we state some auxiliary results which will be used in the proof
of Theorem 1. More precisely, arguing as in \cite{mid} we estimate the first
term in the right hand side of the mixed observability estimate $\left( \ref%
{mixed observability}\right) $.

\begin{lemma}
\label{lem:concave estimates global} Let $t,T\geq 0$ and setting 
\begin{equation*}
\Omega _{t}^{0}=\left\{ \left( s,x\right) \in \left[ t,t+T\right] \times
\Omega ;\text{ }\left\vert \partial _{s}u\left( s,x\right) \right\vert <\eta
_{0}\right\} ,\quad \Omega _{t}^{1}=\Omega _{t,t+T}\setminus \Omega _{t}^{0}.
\end{equation*}%
We define 
\begin{equation*}
\Theta \left( \Omega _{t}^{i}\right) =\int_{\Omega _{t}^{i}}\left\vert
\partial _{s}u\left( s\right) \partial _{s}v\left( s-t\right) \right\vert d%
\mathfrak{m}_{a},\quad \Psi \left( \Omega _{t}^{i}\right) =\int_{\Omega
_{t}^{i}}\left\vert g\left( \partial _{s}u\left( s\right) \right) \partial
_{s}v\left( s-t\right) \right\vert d\mathfrak{m}_{a},\text{ for }i=0,1,
\end{equation*}%
where $u$ and $v$ denote respectively the solution of $\left( \text{\ref%
{sys:nonlinear}}\right) $ and $\left( \text{\ref{sys linear}}\right) $ with
initial data $\left( \varphi _{0},\varphi _{1}\right) $ and $\left( \left(
v_{0},v_{1}\right) =Z\left( t\right) \varphi \right) $\ in $H$.

There exists a positive constant $C$ which may depend on $T$ such that the
following inequalities hold for every $\epsilon >0$ :

\begin{enumerate}
\item (Estimate on the damping near the origin) 
\begin{equation}
\medskip 
\begin{array}{l}
\Theta \left( \Omega _{t}^{0}\right) +\Psi \left( \Omega _{t}^{0}\right)
\leq \epsilon \left\Vert Z\left( t\right) \varphi \right\Vert _{H}^{2} \\ 
+C\left( 1+\frac{1}{\epsilon }\right) \mathfrak{m}_{a}\left( \Omega \right)
h_{0}\left( {\frac{1}{\mathfrak{m}_{a}\left( \Omega \right) }}\int_{\Omega
_{t,t+T}}g\left( \partial _{s}u\right) \partial _{s}u\;d\mathfrak{m}%
_{a}\right) .%
\end{array}
\label{Omega zero estimate global}
\end{equation}

\item (Estimate on the damping near infinity) 
\begin{equation}
\begin{array}{l}
\Theta \left( \Omega _{t}^{1}\right) +\Psi \left( \Omega _{t}^{1}\right)
\leq \epsilon \left\Vert Z\left( t\right) \varphi \right\Vert
_{H}^{2}+C\epsilon ^{-1}\int_{\Omega _{t,t+T}}g\left( \partial _{s}u\right)
\partial _{s}u\;d\mathfrak{m}_{a}.%
\end{array}
\label{theta omega 1 estimate global}
\end{equation}
\end{enumerate}
\end{lemma}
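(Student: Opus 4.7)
The plan is to bound $\Theta(\Omega_t^i)+\Psi(\Omega_t^i)$ by a Young-type splitting that isolates a factor $(\partial_s v(s-t))^2$---controlled via the energy identity for the linear damped equation \eqref{sys linear} on $v$---from a factor $(\partial_s u)^2+g(\partial_s u)^2$, which is in turn estimated by the growth of $g$ near the origin (through $h_0$) on $\Omega_t^0$ and by the linear comparability \eqref{linearly definition} on $\Omega_t^1$.

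First, for any $\widetilde{\epsilon}>0$, Young's inequality yields the pointwise bound
\begin{equation*}
|\partial_s u\,\partial_s v|+|g(\partial_s u)\partial_s v|\leq \frac{1}{4\widetilde{\epsilon}}\bigl((\partial_s u)^2+g(\partial_s u)^2\bigr)+2\widetilde{\epsilon}(\partial_s v)^2
\end{equation*}
(abbreviating $u=u(s)$, $v=v(s-t)$). Since $v$ solves \eqref{sys linear} with data $Z(t)\varphi$, the standard multiplier identity yields $E_v(T)+\int_0^T\!\int_\Omega a(\partial_\tau v)^2\,dx\,d\tau=E_v(0)$; after the change of variable $\tau=s-t$ this becomes
\begin{equation*}
\int_{\Omega_{t,t+T}}\bigl(\partial_s v(s-t)\bigr)^2\,d\mathfrak{m}_a\leq \|Z(t)\varphi\|_H^2,
\end{equation*}
which, once $2\widetilde{\epsilon}$ is relabeled as $\epsilon$, delivers the $\epsilon\|Z(t)\varphi\|_H^2$ term appearing in both conclusions. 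It then suffices to estimate the integral of $(\partial_s u)^2+g(\partial_s u)^2$ over $\Omega_t^0$ and $\Omega_t^1$ separately.

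On $\Omega_t^0$ one has $|\partial_s u|<\eta_0$, so \eqref{h0 definition} gives $(\partial_s u)^2+g(\partial_s u)^2\leq \epsilon_0^{-1}h_0\bigl(g(\partial_s u)\partial_s u\bigr)$ pointwise. I would extend the integral to $\Omega_{t,t+T}$ by positivity of $h_0$, then apply Jensen's inequality to the concave $h_0$ against the normalized measure $\mathfrak{m}_a/\mathfrak{m}_a(\Omega_{t,t+T})$, obtaining
\begin{equation*}
\int_{\Omega_t^0}h_0\bigl(g(\partial_s u)\partial_s u\bigr)\,d\mathfrak{m}_a\leq T\mathfrak{m}_a(\Omega)\,h_0\!\left(\frac{1}{T\mathfrak{m}_a(\Omega)}\int_{\Omega_{t,t+T}}g(\partial_s u)\partial_s u\,d\mathfrak{m}_a\right),
\end{equation*}
using $\mathfrak{m}_a(\Omega_{t,t+T})=T\mathfrak{m}_a(\Omega)$. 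To match the form of \eqref{Omega zero estimate global}, I would move the factor $T$ out of the argument of $h_0$: by monotonicity of $h_0$ if $T\geq 1$, or by the concavity bound $h_0(x/T)\leq T^{-1}h_0(x)$ (valid because $h_0(0)=0$) if $T<1$; the residual $T$-dependent multiplier is absorbed into $C=C(T)$.

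On $\Omega_t^1$ one has $|\partial_s u|\geq \eta_0$, so \eqref{linearly definition} gives $|g(\partial_s u)|\leq m|\partial_s u|$, whence $g(\partial_s u)^2\leq m^3\,g(\partial_s u)\partial_s u$ and $(\partial_s u)^2\leq m\,g(\partial_s u)\partial_s u$, so $(\partial_s u)^2+g(\partial_s u)^2\leq (m+m^3)g(\partial_s u)\partial_s u$ pointwise on $\Omega_t^1$. Substituting into the Young estimate and extending the integral to $\Omega_{t,t+T}$ by positivity yields \eqref{theta omega 1 estimate global} at once. The main difficulty I foresee is purely bookkeeping: tracking how the constants depend on $T,\epsilon_0,\eta_0,m$, and carrying out the $h_0$-rescaling in Part~(1) so that the denominator inside Jensen matches the stated $\mathfrak{m}_a(\Omega)$ while preserving monotonicity/concavity.
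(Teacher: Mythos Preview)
Your proposal is correct and follows essentially the same route as the paper: Young's inequality to split off $(\partial_s v)^2$, the energy estimate for $v$ with data $Z(t)\varphi$ to produce the $\epsilon\|Z(t)\varphi\|_H^2$ term, then \eqref{h0 definition} together with Jensen's inequality for the concave $h_0$ on $\Omega_t^0$, and the linear bounds \eqref{linearly definition} on $\Omega_t^1$. The only extra care you take---reconciling the $T\mathfrak{m}_a(\Omega)$ produced by Jensen with the $\mathfrak{m}_a(\Omega)$ in the statement via monotonicity/concavity of $h_0$---is a cosmetic adjustment that the paper absorbs silently into the $T$-dependent constant.
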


\begin{remark}
\label{remark summarized estimate}It is easy to see that 
\begin{equation}
\Theta \left( \Omega _{t}^{0}\right) +\Psi \left( \Omega _{t}^{0}\right)
+\Theta \left( \Omega _{t}^{1}\right) +\Psi \left( \Omega _{t}^{1}\right)
\leq \epsilon \left\Vert Z\left( t\right) \varphi \right\Vert
_{H}^{2}+C\left( 1+\frac{1}{\epsilon }\right) h\left( \int_{\Omega
_{t,t+T}}g\left( \partial _{s}u\right) \partial _{s}ud\mathfrak{m}%
_{a}\right) ,  \label{summarized estimate lemma}
\end{equation}%
for every $\epsilon >0$.
\end{remark}

\begin{proof}
\textbf{Case (1). }Using Young's inequality, we obtain, for every $\epsilon
>0$%
\begin{equation*}
\begin{array}{l}
\Theta \left( \Omega _{t}^{0}\right) +\Psi \left( \Omega _{t}^{0}\right)
\leq \frac{1}{\epsilon }\int_{\Omega _{t}^{0}}\left[ \left\vert \partial
_{t}u\right\vert ^{2}+\left\vert g\left( \partial _{t}u\right) \right\vert
^{2}\right] d\mathfrak{m}_{a}+\epsilon \int_{\Omega _{t}^{0}}\left\vert
\partial _{t}v\left( s-t\right) \right\vert ^{2}d\mathfrak{m}_{a}.%
\end{array}%
\end{equation*}%
The energy estimate yields%
\begin{equation*}
\begin{array}{l}
\Theta \left( \Omega _{t}^{0}\right) +\Psi \left( \Omega _{t}^{0}\right)
\leq \frac{1}{\epsilon }\int_{\Omega _{t}^{0}}\left[ \left\vert \partial
_{t}u\right\vert ^{2}+\left\vert g\left( \partial _{t}u\right) \right\vert
^{2}\right] d\mathfrak{m}_{a}+\epsilon \left\Vert Z\left( t\right) \varphi
\right\Vert _{H}^{2}%
\end{array}%
\end{equation*}%
Using the inequality (\ref{h0 definition}) for the function $h_{0}$ we
obtain, 
\begin{equation*}
\int_{\Omega _{t}^{0}}\left[ g\left( \partial _{t}u\right) ^{2}+\left\vert
\partial _{t}u\right\vert ^{2}\right] d\mathfrak{m}_{a}\leq \frac{1}{%
\epsilon _{0}}\int_{\Omega _{t}^{0}}h_{0}\left( g\left( \partial
_{t}u\right) \partial _{t}u\right) d\mathfrak{m}_{a}.
\end{equation*}%
Now, since $h_{0}$ is concave, we use Jensen's inequality and we obtain%
\begin{equation*}
\begin{array}{cc}
\int_{\Omega _{t}^{0}}\left( g\left( \partial _{t}u\right) ^{2}+\left\vert
\partial _{t}u\right\vert ^{2}\right) d\mathfrak{m}_{a} & \leq \frac{T%
\mathfrak{m}_{a}\left( \Omega \right) }{\epsilon _{0}}h_{0}\left( {\frac{1}{T%
\mathfrak{m}_{a}\left( \Omega \right) }}\int_{\Omega _{t}^{0}}g\left(
\partial _{t}u\right) \partial _{t}u\;d\mathfrak{m}_{a}\right) \\ 
& \leq \frac{T\mathfrak{m}_{a}\left( \Omega \right) }{\epsilon _{0}}%
h_{0}\left( {\frac{1}{T\mathfrak{m}_{a}\left( \Omega \right) }}\int_{\Omega
_{t,t+T}}g\left( \partial _{t}u\right) \partial _{t}u\;d\mathfrak{m}%
_{a}\right) .%
\end{array}%
\end{equation*}%
Finally, by combining the above estimates we deduce (\ref{Omega zero
estimate global}).

\textbf{Case (2).} Applying Young's inequality and using $\left( \ref%
{linearly definition}\right) $,\ we find%
\begin{equation*}
\begin{array}{l}
\Theta \left( \Omega _{t}^{1}\right) +\Psi \left( \Omega _{t}^{1}\right)
\leq \epsilon \left\Vert Z\left( t\right) \varphi \right\Vert
_{H}^{2}+C\epsilon ^{-1}\int_{\Omega _{t,t+T}}g(\partial _{s}u)\partial
_{s}ud\mathfrak{m}_{a},%
\end{array}%
\end{equation*}%
for every $\epsilon >0$. This concludes the proof of \ the lemma.
\end{proof}

Before giving the proof of Theorem \ref{t:1}, we give the following Lemma
which is a time dependent version of the result in \cite[lemma 3.3]{las-tat}.

\begin{lemma}[\protect\cite{daou 2}]
\label{lemma las tat}Let

\begin{itemize}
\item $W\left( t\right) $ be a continuous, positive non-increasing function
for $t\in \mathbb{R}_{+}$.

\item $\theta $ is a non negative function $%
%TCIMACRO{\U{211d} }%
%BeginExpansion
\mathbb{R}
%EndExpansion
_{+}$. Let $T>0$ and setting, $\kappa \left( t\right) =T\underset{\left[
t,t+T\right] }{\sup }\theta \left( s\right) $, $t\geq 0$.

\item Suppose for every $t\geq 0$, the functions $I-\kappa \left( t\right) 
\mathcal{L}\left( t,.\right) :\left[ 0,W\left( 0\right) \right] \rightarrow 
\mathbb{R}_{+}$ and $\mathcal{L}\left( t,.\right) :\left[ 0,W\left( 0\right) %
\right] \rightarrow \mathbb{R}_{+}$ are increasing, with $\mathcal{L}\left(
t,0\right) =0$

\item The function $\mathcal{L}\left( .,x\right) :\mathbb{R}_{+}\rightarrow 
\mathbb{R}_{+}$ is decreasing for every $x$ in $\left[ 0,W\left( 0\right) %
\right] $.

\item We assume that the following inequality%
\begin{equation}
\begin{array}{l}
W\left( \left( m+1\right) T\right) +\kappa \left( mT\right) \mathcal{L}%
\left( mT,W\left( mT\right) \right) \leq W\left( mT\right)%
\end{array}
\label{observability lemma}
\end{equation}%
holds, for $m=0$,$1$,$2$,... Moreover, $\mathcal{L}\left( t,s\right) $ does
not depend on $m$.
\end{itemize}

Then%
\begin{equation*}
W\left( t\right) \leq S\left( t-T\right) ,\qquad \forall t\geq T,
\end{equation*}%
where $S\left( t\right) $ is the solution of the following nonlinear
differential equation%
\begin{equation}
\begin{array}{l}
\frac{dS}{dt}+\theta \left( t\right) \mathcal{L}\left( t,S\left( t\right)
\right) =0;\qquad S(0)=W(0).%
\end{array}
\label{Ode lema}
\end{equation}%
Moreover, if there exists $T_{0}>>1$ such that%
\begin{equation}
\begin{array}{l}
\underset{t\rightarrow +\infty }{\lim }\int_{T_{0}}^{t}\theta \left(
s\right) \mathcal{L}\left( s,\gamma \right) ds=+\infty ,%
\end{array}
\label{over under damping condition1}
\end{equation}%
for every $0<\gamma <<1$. Then%
\begin{equation*}
\underset{t\rightarrow +\infty }{\lim }S\left( t\right) =0.
\end{equation*}
\end{lemma}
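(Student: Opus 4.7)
The plan is a discrete-to-continuous comparison between the recursive inequality $(\ref{observability lemma})$ and the ODE $(\ref{Ode lema})$, driven entirely by the monotonicity hypotheses on $\mathcal{L}$.

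The core step I would establish is the discrete bound $W(mT) \leq S(mT)$ for every $m \geq 0$, by induction on $m$. The base case $W(0) = S(0)$ is immediate. For the inductive step, combining $(\ref{observability lemma})$ with the assumption that $x \mapsto x - \kappa(mT)\mathcal{L}(mT,x)$ is increasing on $[0,W(0)]$ yields
\[
W((m+1)T) \leq S(mT) - \kappa(mT)\mathcal{L}(mT, S(mT)).
\]
Integrating $(\ref{Ode lema})$ on $[mT,(m+1)T]$ gives $S((m+1)T) = S(mT) - \int_{mT}^{(m+1)T}\theta(s)\mathcal{L}(s,S(s))\,ds$, so it suffices to show that this integral is bounded above by $\kappa(mT)\mathcal{L}(mT,S(mT))$. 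Since $S$ is non-increasing (its derivative $-\theta\mathcal{L}$ is non-positive), one has $S(s) \leq S(mT)$ on $[mT,(m+1)T]$; then $\mathcal{L}(t,\cdot)$ increasing together with $\mathcal{L}(\cdot,x)$ decreasing yields $\mathcal{L}(s,S(s)) \leq \mathcal{L}(mT,S(mT))$, and pulling this out leaves $\int_{mT}^{(m+1)T}\theta(s)\,ds \leq T\sup_{[mT,(m+1)T]}\theta = \kappa(mT)$, which closes the induction.

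From the discrete bound to the pointwise conclusion is short: for $t \geq T$, pick $m \geq 0$ with $t \in [(m+1)T,(m+2)T]$, so that $t - T \in [mT,(m+1)T]$, and chain
\[
W(t) \leq W((m+1)T) \leq S((m+1)T) \leq S(t-T),
\]
using $W$ non-increasing, the inductive step at $m+1$, and $S$ non-increasing respectively. For the asymptotic claim, $S$ is non-increasing and bounded below by $0$ (since $\mathcal{L}(t,0) = 0$ makes $0$ an equilibrium that $S$ cannot cross), hence $S(t) \to \gamma \geq 0$. If $\gamma > 0$, then $S(t) \geq \gamma$ for all $t$, and monotonicity of $\mathcal{L}(t,\cdot)$ gives
\[
S(0) - S(t) = \int_0^t \theta(s)\mathcal{L}(s,S(s))\,ds \geq \int_{T_0}^t \theta(s)\mathcal{L}(s,\gamma)\,ds,
\]
whose right-hand side diverges by $(\ref{over under damping condition1})$, contradicting $S \geq 0$.

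The main obstacle is the inductive step, which hinges on combining all three monotonicity hypotheses in just the right way: the increasing property of $\mathcal{L}(t,\cdot)$ compares $W(mT)$ to $S(mT)$ inside $\mathcal{L}$, the decreasing property in the first argument together with the $\sup$ in the definition of $\kappa$ absorbs the integral against $\theta$, and the increasing property of $I - \kappa\mathcal{L}$ propagates the induction hypothesis through the nonlinear recurrence. Everything else is a one-dimensional comparison-principle bookkeeping.
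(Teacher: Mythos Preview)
Your argument is correct and is precisely the standard discrete-to-continuous comparison that underlies this lemma. The paper itself does not give a proof here: it simply states ``For the proof of Lemma~\ref{lemma las tat} we refer the reader to \cite{daou 2},'' so there is no in-paper argument to compare against. Your induction on $m$, with the three monotonicity hypotheses playing exactly the roles you describe (increasingness of $I-\kappa\mathcal{L}$ to push the induction hypothesis through the recurrence, increasingness of $\mathcal{L}(t,\cdot)$ and decreasingness of $\mathcal{L}(\cdot,x)$ to bound the integral by $\kappa(mT)\mathcal{L}(mT,S(mT))$), followed by the interpolation $W(t)\le W((m{+}1)T)\le S((m{+}1)T)\le S(t-T)$ via the monotonicity of $W$ and $S$, and the contradiction argument for $S(t)\to 0$, is the expected route and matches the structure of the original result in \cite{daou 2} (itself a time-dependent extension of the Lasiecka--Tataru lemma \cite{las-tat}).

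The only point worth tightening is the claim that $S$ stays in $[0,W(0)]$ so that the monotonicity hypotheses on $\mathcal{L}$ remain applicable along the trajectory: you invoke ``$0$ is an equilibrium that $S$ cannot cross,'' which tacitly assumes a forward-uniqueness property for the ODE \eqref{Ode lema}. In the applications of the paper this is harmless because $h^{-1}$ is locally Lipschitz, but strictly speaking it deserves a word.
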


For the proof of Lemma \ref{lemma las tat} we refer the reader to \cite{daou
2}.

\subsection{Proof of Theorem \protect\ref{t:1}}

Let $\varphi \in K$. For every $t\geq 0$, $Z\left( t\right) \varphi \in K$
and we have%
\begin{eqnarray}
\left\Vert Z\left( t\right) \varphi \right\Vert _{H}^{2} &\leq &C\left(
\left\Vert Z\left( t\right) \varphi \right\Vert _{H}^{2}-\left\Vert Z\left(
t+T\right) \varphi \right\Vert _{H}^{2}\right)  \notag \\
&&+C\int_{t}^{t+T}\int_{\Omega }\rho \left( s\right) g\left( \partial
_{t}u\right) \partial _{t}v\left( s-t\right) +\partial _{t}u\partial
_{t}v\left( s-t\right) d\mathfrak{m}_{a}  \notag \\
&\leq &C\left( \left\Vert Z\left( t\right) \varphi \right\Vert
_{H}^{2}-\left\Vert Z\left( t+T\right) \varphi \right\Vert _{H}^{2}\right) 
\notag \\
&&+C\int_{t}^{t+T}\int_{\Omega }\left[ \rho \left( s\right) g\left( \partial
_{t}u\left( s\right) \right) \partial _{s}\tilde{v}\left( s\right) +\partial
_{t}u\left( s\right) \partial _{s}\tilde{v}\left( s\right) \right] d%
\mathfrak{m}_{a}  \label{estimate 1}
\end{eqnarray}%
where $\tilde{v}=v\left( s-t\right) $, for $s\geq t$.

\begin{enumerate}
\item \textbf{The function }$\rho $\textbf{\ is increasing on }$%
%TCIMACRO{\U{211d} }%
%BeginExpansion
\mathbb{R}
%EndExpansion
_{+}$.%
\begin{equation*}
\begin{array}{l}
\left\Vert Z\left( t\right) \varphi \right\Vert _{H}^{2}\leq C\rho \left(
t+T\right) \left[ \Theta \left( \Omega _{t}^{0}\right) +\Psi \left( \Omega
_{t}^{0}\right) +\Theta \left( \Omega _{t}^{1}\right) +\Psi \left( \Omega
_{t}^{1}\right) \right] \\ 
\text{ \ \ \ \ \ \ \ \ \ \ \ \ \ \ \ \ \ }+C\left( \left\Vert Z\left(
t\right) \varphi \right\Vert _{H}^{2}-\left\Vert Z\left( t+T\right) \varphi
\right\Vert _{H}^{2}\right) \\ 
\text{ \ \ \ \ \ \ \ \ \ \ \ \ \ \ \ }=:I_{1}+I_{2}.%
\end{array}%
\end{equation*}%
Using $\left( \ref{estimate 1}\right) $ and $\left( \text{\ref{summarized
estimate lemma}}\right) $ and taking $\epsilon =\frac{1}{4\rho \left(
t+T\right) }$ (where $\epsilon $ is the constant that appears in Lemma \ref%
{lem:concave estimates global}),we obtain 
\begin{equation*}
I_{1}\leq C_{T}\left( \rho \left( t+T\right) +\rho ^{2}\left( t+T\right)
\right) h\left( \int_{\Omega _{t,t+T}}g\left( \partial _{s}u\right) \partial
_{s}u\;d\mathfrak{m}_{a}\right) .
\end{equation*}%
Since $\rho \left( t\right) \geq 1$ for all $t\geq 0$, we infer that%
\begin{equation*}
\begin{array}{c}
I_{1}\leq \frac{C_{T}}{\alpha \left( t\right) }h\left( \int_{\Omega
_{t,t+T}}g\left( \partial _{s}u\right) \partial _{s}u\;d\mathfrak{m}%
_{a}\right) ,%
\end{array}%
\end{equation*}%
where the function $\alpha $ is defined in Section 1. Now, using $\left( \ref%
{system phi}\right) $ and $\left( \ref{zt estimate}\right) $, we find%
\begin{eqnarray*}
\int_{\Omega _{t,t+T}}g\left( \partial _{s}u\right) \partial _{s}ud\mathfrak{%
m}_{a} &=&\int_{0}^{T}\int_{\Omega }g\left( \partial _{s}\phi \right)
\partial _{s}\phi d\mathfrak{m}_{a} \\
&\leq &\frac{1}{\rho \left( t\right) }\int_{0}^{T}\int_{\Omega }\rho \left(
s+t\right) g\left( \partial _{s}\phi \right) \partial _{s}\phi d\mathfrak{m}%
_{a} \\
&\leq &\frac{1}{\rho \left( t\right) }\left( \left\Vert Z\left( t\right)
\varphi \right\Vert _{H}^{2}-\left\Vert U_{t}\left( T\right) Z\left(
t\right) \varphi \right\Vert _{H}^{2}\right) \\
&\leq &\frac{1}{\rho \left( t\right) }\left( \left\Vert Z\left( t\right)
\varphi \right\Vert _{H}^{2}-\left\Vert Z\left( t+T\right) \varphi
\right\Vert _{H}^{2}\right) .
\end{eqnarray*}%
Thus%
\begin{equation}
\begin{array}{c}
I_{1}\leq \frac{C_{T}}{\alpha \left( t\right) }h\left( \frac{1}{\rho \left(
t\right) }\left( \left\Vert Z\left( t\right) \varphi \right\Vert
_{H}^{2}-\left\Vert Z\left( t+T\right) \varphi \right\Vert _{H}^{2}\right)
\right) .%
\end{array}
\label{I1 estimate increasing}
\end{equation}%
For the term $I_{2}$ it is obvious that,%
\begin{equation*}
\begin{array}{c}
I_{2}\leq \frac{C_{T}}{\alpha \left( t\right) }\left( \frac{1}{\rho \left(
t\right) }\left( \left\Vert Z\left( t\right) \varphi \right\Vert
_{H}^{2}-\left\Vert Z\left( t+T\right) \varphi \right\Vert _{H}^{2}\right)
\right) .%
\end{array}%
\end{equation*}%
Combining the estimate above with $\left( \ref{I1 estimate increasing}%
\right) $, we obtain%
\begin{eqnarray*}
\left\Vert Z\left( t\right) \varphi \right\Vert _{H}^{2} &\leq &\frac{C_{T}}{%
\alpha \left( t\right) }\left( I+h\right) \left[ \frac{1}{\rho \left(
t\right) }\left( \left\Vert Z\left( t\right) \varphi \right\Vert
_{H}^{2}-\left\Vert Z\left( t+T\right) \varphi \right\Vert _{H}^{2}\right) %
\right] \\
&\leq &\frac{C_{T}}{\alpha \left( t\right) }h\left[ \frac{1}{\rho \left(
t\right) }\left( \left\Vert Z\left( t\right) \varphi \right\Vert
_{H}^{2}-\left\Vert Z\left( t+T\right) \varphi \right\Vert _{H}^{2}\right) %
\right] .
\end{eqnarray*}%
\ This yields,%
\begin{equation*}
\left\Vert Z\left( t+T\right) \varphi \right\Vert _{H}^{2}+\left( \underset{%
\left[ t,t+T\right] }{\sup }T\beta \left( s\right) \right) h^{-1}\left( 
\frac{\alpha \left( t\right) \left\Vert Z\left( t\right) \varphi \right\Vert
_{H}^{2}}{C_{T}}\right) \leq \left\Vert Z\left( t\right) \varphi \right\Vert
_{H}^{2}.
\end{equation*}

\item \textbf{The function }$\rho $\textbf{\ is decreasing on }$%
%TCIMACRO{\U{211d} }%
%BeginExpansion
\mathbb{R}
%EndExpansion
_{+}$.

We follows the same computations as in case 1. However, we take in this case 
$\epsilon =\frac{1}{4}$ and we obtain 
\begin{equation*}
I_{1}\leq C_{T}h\left( \int_{\Omega _{t,t+T}}g\left( \partial _{s}u\right)
\partial _{s}u\;d\mathfrak{m}_{a}\right) .
\end{equation*}%
This implies,%
\begin{equation*}
\begin{array}{c}
I_{1}\leq \frac{C_{T}}{\alpha \left( t\right) }h\left( \int_{\Omega
_{t,t+T}}g\left( \partial _{s}u\right) \partial _{s}u\;d\mathfrak{m}%
_{a}\right)%
\end{array}%
\end{equation*}%
where the function $\alpha $ is defined in Section 1. Now using the fact
that $\rho $ is decreasing, $\left( \ref{system phi}\right) $ and $\left( %
\ref{zt estimate}\right) $ we find%
\begin{eqnarray*}
\int_{\Omega _{t,t+T}}g\left( \partial _{s}u\right) \partial _{s}ud\mathfrak{%
m}_{a} &=&\int_{0}^{T}\int_{\Omega }g\left( \partial _{s}\phi \right)
\partial _{s}\phi d\mathfrak{m}_{a} \\
&\leq &\frac{1}{\rho \left( t+T\right) }\int_{0}^{T}\int_{\Omega }\rho
\left( s+t\right) g\left( \partial _{s}\phi \right) \partial _{s}\phi d%
\mathfrak{m}_{a} \\
&\leq &\frac{1}{\rho \left( t+T\right) }\left( \left\Vert Z\left( t\right)
\varphi \right\Vert _{H}^{2}-\left\Vert U_{t}\left( T\right) Z\left(
t\right) \varphi \right\Vert _{H}^{2}\right) \\
&\leq &\frac{1}{\rho \left( t+T\right) }\left( \left\Vert Z\left( t\right)
\varphi \right\Vert _{H}^{2}-\left\Vert Z\left( t+T\right) \varphi
\right\Vert _{H}^{2}\right) .
\end{eqnarray*}%
Therefore%
\begin{equation}
\begin{array}{c}
I_{1}\leq \frac{C_{T}}{\alpha \left( t\right) }h\left( \frac{1}{\rho \left(
t+T\right) }\left( \left\Vert Z\left( t\right) \varphi \right\Vert
_{H}^{2}-\left\Vert Z\left( t+T\right) \varphi \right\Vert _{H}^{2}\right)
\right) .%
\end{array}
\label{I1 estimate decreasing}
\end{equation}%
Next we estimate the term as follow, using $\rho \left( t\right) \leq 1$,
for all $t\geq 0$,%
\begin{equation*}
\begin{array}{c}
I_{2}\leq \frac{C_{T}}{\alpha \left( t\right) }\left( \frac{1}{\rho \left(
t+T\right) }\left( \left\Vert Z\left( t\right) \varphi \right\Vert
_{H}^{2}-\left\Vert Z\left( t+T\right) \varphi \right\Vert _{H}^{2}\right)
\right) .%
\end{array}%
\end{equation*}%
Combining the estimate above with $\left( \ref{I1 estimate decreasing}%
\right) $, we obtain%
\begin{equation*}
\left\Vert Z\left( t\right) \varphi \right\Vert _{H}^{2}\leq \frac{C_{T}}{%
\alpha \left( t\right) }h\left[ \frac{1}{\rho \left( t+T\right) }\left(
\left\Vert Z\left( t\right) \varphi \right\Vert _{H}^{2}-\left\Vert Z\left(
t+T\right) \varphi \right\Vert _{H}^{2}\right) \right] .
\end{equation*}%
\ This gives,%
\begin{equation*}
\left\Vert Z\left( t+T\right) \varphi \right\Vert _{H}^{2}+\left( \underset{%
\left[ t,t+T\right] }{\sup }T\beta \left( s\right) \right) h^{-1}\left( 
\frac{\alpha \left( t\right) \left\Vert Z\left( t\right) \varphi \right\Vert
_{H}^{2}}{C_{T}}\right) \leq \left\Vert Z\left( t\right) \varphi \right\Vert
_{H}^{2},
\end{equation*}%
for every $t\geq 0$, where the functions $\alpha $ and $\beta $ are defined
in Subsection \ref{sec:h0}. We summarize that in both cases $\rho $
increasing and $\rho $ decreasing we obtained that 
\begin{equation}
\left\Vert Z\left( t+T\right) \varphi \right\Vert _{H}^{2}+\left( \underset{%
\left[ t,t+T\right] }{\sup }T\beta \left( s\right) \right) h^{-1}\left( 
\frac{\alpha \left( t\right) \left\Vert Z\left( t\right) \varphi \right\Vert
_{H}^{2}}{C_{T}}\right) \leq \left\Vert Z\left( t\right) \varphi \right\Vert
_{H}^{2},  \label{ctdefinition}
\end{equation}%
for every $t\geq 0$ and for some $C_{T}>1.$
\end{enumerate}

Let $K\geq C_{T}$ and consider $t=mT$,%
\begin{equation*}
\begin{array}{l}
\left\Vert Z\left( \left( m+1\right) T\right) \varphi \right\Vert
_{H}^{2}+\left( \underset{\left[ mT,\left( m+1\right) T\right] }{\sup }%
T\beta \left( s\right) \right) h^{-1}\left( \alpha \left( mT\right) \frac{%
\left\Vert Z\left( mT\right) \varphi \right\Vert _{H}^{2}}{K}\right) \leq
\left\Vert Z\left( mT\right) \varphi \right\Vert _{H}^{2}%
\end{array}%
\end{equation*}%
for all $m\in \mathbb{N}$.

Hence the above estimate and Lemma \ref{lemma las tat}, imply the energy
result in Theorem \ref{t:1}.

Now set $W\left( t\right) =E_{u}\left( t\right) $, $\theta \left( t\right)
=\beta \left( t\right) $\ and let 
\begin{equation*}
\mathcal{L}\left( t,x\right) =h^{-1}\left( \frac{\alpha \left( t\right) }{K}%
x\right) ,\text{ for }\left( t,x\right) \in 
%TCIMACRO{\U{211d} }%
%BeginExpansion
\mathbb{R}
%EndExpansion
_{+}\times \left[ 0,W\left( 0\right) \right] .
\end{equation*}%
We recall that $\kappa \left( t\right) =T\underset{\left[ t,t+T\right] }{%
\sup }\theta \left( s\right) $. It is clear that, $0<\alpha \left( t\right)
\leq 1$ and $0\leq \alpha \left( t\right) \kappa \left( t\right) \leq 1$,
for every $t\geq 0$. Then for every $x_{1}>x_{2}$ in $\left[ 0,W\left(
0\right) \right] $%
\begin{equation*}
\begin{array}{l}
\medskip \left( I-\kappa \left( t\right) \mathcal{L}\right) \left(
t,x_{1}\right) -\left( I-\kappa \left( t\right) \mathcal{L}\right) \left(
t,x_{2}\right) \\ 
\medskip \geq \frac{x_{1}-x_{2}}{K}\left( K-\alpha \left( t\right) \kappa
\left( t\right) \frac{h^{-1}\left( \left( \alpha \left( t\right)
x_{1}\right) /K\right) -h^{-1}\left( \left( \alpha \left( t\right)
x_{2}\right) /K\right) }{\left( \left( x_{1}-x_{2}\right) \alpha \left(
t\right) \right) /K}\right) ,%
\end{array}%
\end{equation*}%
which yields,%
\begin{equation*}
\begin{array}{l}
\medskip \left( I-\kappa \left( t\right) \mathcal{L}\right) \left(
t,x_{1}\right) -\left( I-\kappa \left( t\right) \mathcal{L}\right) \left(
t,x_{2}\right) \\ 
\medskip \geq \frac{x_{1}-x_{2}}{K}\left( K-\dfrac{h^{-1}\left( \left(
\alpha \left( t\right) x_{1}\right) /K\right) -h^{-1}\left( \left( \alpha
\left( t\right) x_{2}\right) /K\right) }{\left( \left( x_{1}-x_{2}\right)
\alpha \left( t\right) \right) /K}\right) .%
\end{array}%
\end{equation*}%
Using the fact that $h^{-1}$ is positive and convex on $\left[ 0,W\left(
0\right) \right] $, we obtain 
\begin{equation*}
\begin{array}{c}
\left\vert \frac{h^{-1}\left( \left( \alpha \left( t\right) x_{1}\right)
/K\right) -h^{-1}\left( \left( \alpha \left( t\right) x_{2}\right) /K\right) 
}{\left( \left( x_{1}-x_{2}\right) \alpha \left( t\right) \right) /K}%
\right\vert \leq f_{1}\left( \left( \alpha \left( t\right) E_{u}(0)\right)
/K\right) ,%
\end{array}%
\end{equation*}%
where%
\begin{equation*}
\begin{array}{cccc}
f_{1}: & 
%TCIMACRO{\U{211d} }%
%BeginExpansion
\mathbb{R}
%EndExpansion
_{+} & \longrightarrow & 
%TCIMACRO{\U{211d} }%
%BeginExpansion
\mathbb{R}
%EndExpansion
_{+} \\ 
& x & \longmapsto & \frac{d^{+}}{dx}h^{-1}\left( x\right) .%
\end{array}%
\end{equation*}%
Since $f_{1}$ is an increasing function on $%
%TCIMACRO{\U{211d} }%
%BeginExpansion
\mathbb{R}
%EndExpansion
_{+}$ and $\alpha \left( t\right) \leq 1$, we only have to choose $K$ such
that%
\begin{equation*}
\begin{array}{l}
K\geq f_{1}\left( E_{u}(0)/K\right) \geq f_{1}\left( \left( \alpha \left(
t\right) E_{u}(0)\right) /K\right) .%
\end{array}%
\end{equation*}%
Knowing that $f_{1}\left( s\right) \leq 1$ for every $s$ and $K>C_{T}>1,$ we
deduce that the function, $\left( I-\kappa \left( t\right) \mathcal{L}\left(
t,.\right) \right) $ is increasing on $\left[ 0,W\left( 0\right) \right] $,
for every $t\geq 0$. Moreover it is clear that the function $t\rightarrow 
\mathcal{L}\left( t,x\right) $ is decreasing on $%
%TCIMACRO{\U{211d} }%
%BeginExpansion
\mathbb{R}
%EndExpansion
_{+}$, for every $x$ in $\left[ 0,W\left( 0\right) \right] $ and $\mathcal{L}%
\left( t,.\right) :\left[ 0,W\left( 0\right) \right] \rightarrow \mathbb{R}%
_{+}$ is increasing for every $t\geq 0$. Now, from lemma \ref{lemma las tat}%
, we infer that 
\begin{equation*}
E_{u}\left( t\right) \leq S\left( t-T\right) ,\qquad \text{for all }t\geq T,
\end{equation*}%
where $S\left( t\right) $ is the solution of the following nonlinear
differential equation%
\begin{equation*}
\frac{dS}{dt}+\beta \left( t\right) h^{-1}\left( \frac{\alpha \left(
t\right) }{K}S\left( t\right) \right) =0,\qquad S(0)=E_{u}(0).
\end{equation*}%
Furthermore, if for some $T_{0}\gg 1$, the following result%
\begin{equation*}
\begin{array}{l}
\int_{T_{0}}^{t}\beta \left( s\right) h^{-1}\left( \frac{\gamma \alpha
\left( s\right) }{K}\right) ds\underset{t\rightarrow +\infty }{%
\longrightarrow }+\infty ,%
\end{array}%
\end{equation*}%
holds, for every $0<\gamma \ll 1$. We conclude that%
\begin{equation*}
\left\Vert Z\left( t\right) \varphi \right\Vert _{H}\underset{t\rightarrow
+\infty }{\longrightarrow }0.
\end{equation*}%
This completes the proof of Theorem \ref{t:1}.

\textbf{Acknowledgements: }\textit{The authors are very grateful to the
anonymous referees for their helpful comments and suggestions, that improved
the manuscript.}

\textit{This paper has been supported by deanship of scientific research of
Dammam University under the reference 2012045.}

\end{document}